\documentclass[a4paper,11pt]{article}

\usepackage{amsthm,amssymb,amsmath,enumerate, subfig}
\usepackage{graphicx}
\usepackage{boxedminipage}
\usepackage{color}
\allowdisplaybreaks[2]



\def\silent#1\par{\par}

\makeatletter
\renewcommand{\@seccntformat}[1]{\@nameuse{the#1}.\quad}
\makeatother

\let\eps=\varepsilon
\let\epsilon=\varepsilon

\newtheorem{theorem}{Theorem}
\newtheorem{lemma}[theorem]{Lemma}
\newtheorem{fact}[theorem]{Fact}

\newtheorem{definition}[theorem]{Definition}

\newtheorem{corollary}[theorem]{Corollary}



\setcounter{secnumdepth}{5}

\title{Cycles are strongly Ramsey-unsaturated}
\author{Jozef Skokan and Maya Stein\footnote{Supported by Fondecyt grant 11090141.}}

\date{\today}

\begin{document}

\maketitle

\begin{abstract} 
We call a graph $H$ \emph{Ramsey-unsaturated} if there is an edge in the complement of $H$ such that the Ramsey number $r(H)$ of $H$ does not change upon adding it to $H$. This notion was introduced by Balister, Lehel and Schelp in~\cite{balister}, where it is shown that cycles (except for $C_4$) are Ramsey-unsaturated, and conjectured that, moreover, one may add {\it any} chord without changing the Ramsey number of the cycle $C_n$, unless $n$ is even and adding the chord creates an odd cycle.

 We prove this conjecture for large cycles by showing a~stronger statement: If a graph $H$ is obtained by adding a linear number of chords to a cycle $C_n$, then $r(H)=r(C_n)$, as long as the maximum degree of $H$ is bounded, $H$ is either bipartite (for even $n$) or almost bipartite (for odd $n$), and $n$ is large. 

This motivates us to call cycles {\em strongly} Ramsey-unsaturated.  Our proof uses the regularity method.
\end{abstract}

\thispagestyle{empty}

\section{Introduction}

The Ramsey number $r(H)$ of a graph $H$ is the smallest integer $N$ such that every $2$-colouring of the edges of the complete graph $K_N$ on $N$ vertices contains a monochromatic copy of $H$. 

It has been known \cite{BonErd,FauSch,Ros} for a long time that the cycles $C_n$ on $n$ vertices have Ramsey numbers linear in $n$, while the Ramsey numbers of complete graphs $K_n$ are exponential. So from the beginnings of Ramsey theory one important question has been hovering in the background:  if we keep adding chords to $C_n$ until reaching $K_n$, at which point will the Ramsey number jump up? Very little is known. When we are close to the complete graph, then, for $3\leq n\leq 6,$ it holds that $r(K_n)>r(K_n-e)$, and it is conjectured  (see~\cite{balister}) that this is also true for every $n>6$.

At the other end of the spectrum, that is, when we are close to the cycle, all that is known is the aforementioned result by  Balister, Lehel, and Schelp~\cite{balister}. Using their terms, we call a graph $H$ \emph{Ramsey saturated} if $r(H+e)>r(H)$ for every edge $e$ in the complement of $H$, and \emph{Ramsey unsaturated} otherwise. In~\cite{balister} it is proved that the cycle $C_n$ is Ramsey unsaturated for every $n>4$, and it is conjectured that  one may add any edge $e$ to $C_n$ without increasing the Ramsey number, unless $n$ is even and adding $e$ destroys the bipartiteness of $C_n$ (Conjecture 3 in~\cite{balister}). Balister, Lehel, and Schelp also remark `One would expect to be able to add more than one edge to the cycle without increasing the Ramsey number'.

We show that this is indeed the case, at least asymptotically. Our main result is  that one may add a linear number $cn$ of chords to $C_n$ without changing the Ramsey number, as long as the obtained graph $H$ has bounded  maximum degree $\Delta(H)$, and as long as $H$ is bipartite if $n$ is even, and almost bipartite if $n$ is odd. (A non-bipartite graph is called {\it almost bipartite} if the removal of one vertex results in a bipartite graph.) Since our proof relies on the regularity method, our result holds only for large graphs, with the constant $c$ depending only on $\Delta (H)$.

\begin{theorem}\label{main1}
 For every $\Delta\in\mathbb N$ there are $n_0\in\mathbb N$ and $c>0$ such that, for every $n\geq n_0$ and every collection $D$ of chords of $C_n$ with
 \begin{enumerate}[(1)]
  \item $|D|\leq cn$ and\label{thm1cond1}
  \item $\Delta (C_n\cup D)\leq \Delta$,\label{thm1cond2}
 \end{enumerate}
 the following two conditions are equivalent
 \begin{enumerate}[(a)]
  \item $r(C_n\cup D)=r(C_n)$,
  \item either $n$ is even and $C_n\cup D$ is bipartite, or
        $n$ is odd and $C_n\cup D$ is almost bipartite.
 \end{enumerate}
\end{theorem}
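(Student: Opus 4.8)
Throughout write $H=C_n\cup D$ and $N=r(C_n)$, recalling that for $n$ large one has $N=2n-1$ when $n$ is odd and $N=\tfrac32n-1$ when $n$ is even. We treat the two implications separately.

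\medskip\noindent\textbf{(a)$\Rightarrow$(b).} We prove the contrapositive, using only the classical extremal colourings (the bounded‑degree and chord‑count hypotheses are not needed here). Suppose first that $n$ is even but $H$ is not bipartite, so $H$ contains an odd cycle. Colour $K_{2n-2}$ by splitting the vertices into two classes of size $n-1$, making each a red clique and all crossing edges blue. The red graph has every component of order $n-1<n$, and the blue graph is the bipartite graph $K_{n-1,n-1}$; neither contains the connected non‑bipartite graph $H$, so $r(H)\ge 2n-1>\tfrac32n-1=N$. Suppose next that $n$ is odd but $H$ is not almost bipartite. Colour $K_{2n-1}$ on a vertex set $A\cup B\cup\{z\}$ with $|A|=|B|=n-1$ by making $A$ and $B$ red cliques and colouring every other edge (all remaining edges, including those at $z$) blue. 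The red graph has components of orders $n-1,n-1,1$, all smaller than $n$, so it contains no copy of the connected graph $H$. In the blue graph, deleting $z$ leaves the bipartite graph $K_{n-1,n-1}$; hence a copy of $H$ in the blue graph either avoids $z$, forcing $H$ to be bipartite (impossible, as $n$ is odd), or uses $z$, forcing some vertex of $H$ to meet all odd cycles of $H$ (impossible, as $H$ is not almost bipartite). Thus $r(H)\ge 2n>2n-1=N$, and in both cases (a) fails.

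\medskip\noindent\textbf{(b)$\Rightarrow$(a), the reduced structure.} The bound $r(H)\ge N$ is immediate since $C_n\subseteq H$: any $2$‑colouring of $K_{N-1}$ without a monochromatic $C_n$ has none of $H$. For the reverse inequality fix a $2$‑colouring of $E(K_N)$ and run the regularity method as in the proof of the value $r(C_n)=N$: apply the Regularity Lemma, form the reduced graph $R$ on $k$ clusters of size $\approx N/k$ with each regular pair coloured by its majority colour, and apply the connected‑matching theorem for cycles to obtain in one colour — say red — a connected subgraph $\mathcal C\subseteq R$ together with a matching $M\subseteq\mathcal C$ whose blow‑up (after passing to super‑regular pairs) accommodates a red cycle on $n$ vertices, together with the stability information that $\mathcal C$ is close to a clique, or — when $n$ is even — to a complete bipartite graph, and that for odd $n$ it is in addition non‑bipartite. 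Condition (b) is exactly what lets all of $H$, and not just its cycle, fit into whichever of these shapes $\mathcal C$ has: if $H$ is bipartite it embeds into a (near) complete bipartite $\mathcal C$ with the bipartition of $H$ aligned with the two sides, and into a (near) clique $\mathcal C$ trivially; if $n$ is odd and $H$ is almost bipartite, its single bad vertex can be hosted in an odd gadget that a non‑bipartite $\mathcal C$ supplies.

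\medskip\noindent\textbf{(b)$\Rightarrow$(a), the embedding.} To use that $D$ is sparse and $H$ has bounded degree, chop the cyclic vertex order of $C_n$ into $B$ consecutive arcs of length $\approx n/B$, with $B$ much larger than $k$, and call a chord of $D$ \emph{short} if its endpoints lie in the same or in consecutive arcs, \emph{long} otherwise. Ordering $C_n$ as a zigzag from a fixed vertex, $C_n$ together with its short chords has bandwidth $O(n/B)$, hence — since it is bipartite ($n$ even), or becomes bipartite after deleting one vertex $w$ ($n$ odd) — it embeds arc‑by‑arc along $M$ by the standard connected‑matching embedding: consecutive blocks of arcs go to the same or neighbouring edges of $M$, the two bipartition classes alternate with the two sides of the super‑regular pairs, the exceptional $w$ and its $\le\Delta$ neighbours are placed in the odd gadget of $\mathcal C$ (odd case), and each short chord is absorbed inside a single super‑regular pair. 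There are at most $|D|\le cn$ long chords; since $H$ is bipartite (respectively $H-w$ is), the endpoints of such a chord lie on opposite sides, so — the bipartition being aligned with the two sides of $\mathcal C$ — they are sent to adjacent clusters whenever $\mathcal C$ is close to complete bipartite, and to adjacent clusters anyway when $\mathcal C$ is close to a clique; the few long chords landing on a non‑edge or an irregular pair are repaired using the slack $N-n\ge n/2-1$ and the freedom of choice inside each pair. With $c=c(\Delta)$ chosen small enough relative to the regularity and stability parameters, all of this goes through and yields a monochromatic copy of $H$, so $r(H)\le N$.

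\medskip\noindent\textbf{Main obstacle.} The core difficulty is to obtain the \emph{exact} value $r(H)=N$ rather than merely $r(H)\le N+o(n)$: this forces the whole argument to be carried out near the extremal configuration, combining the stability form of the connected‑matching theorem (pinning $\mathcal C$ down to a near‑clique or a near‑complete‑bipartite graph, and for odd $n$ locating an odd gadget) with a careful accounting of how the linearly many long chords and all the short chords can be accommodated inside that nearly rigid structure without destroying the arc‑by‑arc layout of the bounded‑bandwidth part. It is precisely this accounting that determines how large a chord density $c=c(\Delta)$ the argument can afford. Conditions (b) are what keep $H$ inside the structure that a general colouring is guaranteed to contain, and their failure is what the explicit colourings in the first step exploit.
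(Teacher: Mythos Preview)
Your implication (a)$\Rightarrow$(b) is fine and matches the paper's colourings. The problem is in (b)$\Rightarrow$(a). You assert that the connected component $\mathcal C$ carrying the matching is, after stability, always close to a clique or (for even $n$) close to complete bipartite. That is not what the stability theorems for cycle Ramsey numbers deliver. The results the paper uses (its Theorems~6 and~7, applied to the reduced graph $R$) give a genuine dichotomy: either $R$ contains a long monochromatic cycle $C_\ell$ of the right parity, or the colouring of $R$ is close to the extremal partition. In the first alternative the cycle $C_\ell$ is all you have; $\mathcal C$ need not be dense, and in particular the endpoints of a long chord of $H$, once you lay the arcs of $C_n$ along $C_\ell$, will typically land in two clusters that are \emph{not} adjacent in the monochromatic graph. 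Your ``repair using the slack $N-n$'' then has no traction: you would have to reroute the cycle through those two specific clusters, but the only monochromatic edges you control are the $\ell$ edges of $C_\ell$. So your embedding scheme collapses precisely in the non-extremal case.

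The paper sidesteps this with a different decomposition of $H=C_n\cup D$, and this is the idea you are missing. Rather than cutting the cycle into arcs and classifying chords as short/long, the paper gathers \emph{all} of $D$, together with every sub-arc of $C_n$ between consecutive chord-endpoints that is shorter than a threshold $z\approx \eps^2 n/(M_{\rm reg}|D|)$, into a single small graph $H'$ with $|H'|=O(\eps^2 n/M_{\rm reg})$; what remains of $C_n$ is then a family of internally disjoint \emph{long} odd paths between vertices of $H'$. Because $|D|\le cn$ with $c$ tiny, $H'$ is small enough to be embedded by the blow-up lemma into a \emph{single} regular pair $(V_1,V_\ell)$ of the monochromatic cycle $C_\ell$ (or into a single regular triangle for odd $n$); the long paths are then threaded one after another around $C_\ell$ by a path-embedding lemma. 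Every chord is thus handled inside one pair, so adjacency of far-apart clusters is never needed. In the extremal-structure case (your near-clique\,/\,near-bipartite picture) the paper does something closer to what you sketch, embedding $H$ directly via a minimum-degree argument; but that case only arises after the cycle case has been disposed of.
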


Observe that Theorem~\ref{main1} confirms Conjecture 3 of~\cite{balister}, as an odd cycle plus one chord is almost bipartite.

When $n>4$ is even, $r(C_n)=3n/2-1\leq2(n-1)$ (see \cite{FauSch,Ros}) and the implication $(a)\Rightarrow (b)$ of Theorem~\ref{main1} can easily be seen by considering the following construction: 
Colour all edges of a maximal cut in $K_{2(n-1)}$ blue, and the remaining edges red. By (a), there is a monochromatic copy of $C_n\cup D$. Since the components of the red subgraph of $K_{2(n-1)}$ are too small to contain $C_n\cup D$, there is a blue copy of  $C_n\cup D$ in $K_{2(n-1)}$. Thus $C_n\cup D$ is bipartite.

For odd $n>4$, we know that $r(C_n)=2n-1$ (see \cite{FauSch,Ros}). Take the above colouring of $K_{2(n-1)}$ and add one new vertex to $K_{2(n-1)}$ all whose incident edges are coloured blue. Then as above, we find a blue copy of $C_n\cup D$. This copy cannot be bipartite since it contains the odd cycle $C_n$, hence it must be almost bipartite.

\medskip

The odd case of Theorem~\ref{main1} follows from a slightly more general result. As a generalization of almost-bipartiteness, let us call a graph \emph{$k$-almost bipartite} (for $k\in\mathbb N$) if it contains a set of $k$ independent vertices such that the removal of these vertices leaves a bipartite graph, but the removal of any set of less than $k$ independent vertices does not. Then a graph is $1$-almost bipartite if, and only if, it is almost bipartite.

\begin{theorem}\label{main2}
 For every $\Delta$ and $k\in\mathbb N$  there are $n_0\in\mathbb N$ and $c>0$ such that, for every odd $n\geq n_0$ and every collection $D$ of chords of $C_n$ with
 \begin{enumerate}[(1)]
  \item $|D|\leq c n$ and
  \item $\Delta (C_n\cup D)\leq \Delta$,
 \end{enumerate}
 the following two conditions are equivalent 
 \begin{enumerate}[(a)]
  \item $r(C_n\cup D)=r(C_n)+k-1$,
  \item $C_n\cup D$ is $k$-almost bipartite.
 \end{enumerate}
\end{theorem}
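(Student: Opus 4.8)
The plan is to prove the two implications separately, reducing the problem to a quantitative, ``stability''-flavoured statement about Ramsey colourings and then deploying the regularity method. For the direction $(b)\Rightarrow(a)$ one must show the lower bound $r(C_n\cup D)\ge r(C_n)+k-1$ and the upper bound $r(C_n\cup D)\le r(C_n)+k-1$. The lower bound is a construction, generalising the one given in the excerpt for $k=1$: take a maximal-cut $2$-colouring of $K_{2(n-1)}$ (blue $=$ the cut, red $=$ the rest) and add $k-1$ new vertices, each joined to everything in blue; no red copy of $C_n\cup D$ exists because the red components are too small, and no blue copy exists because the blue graph is $(k-1)$-almost bipartite while $C_n\cup D$ is $k$-almost bipartite. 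This gives $N=2(n-1)+k-1=r(C_n)+k-2$ vertices with no monochromatic $C_n\cup D$, hence $r(C_n\cup D)\ge 2n-2+k = r(C_n)+k-1$. The upper bound $r(C_n\cup D)\le r(C_n)+k-1$ is the substantial half: given any $2$-colouring of $K_N$ with $N=2n-2+k$, one must find a monochromatic copy of $C_n\cup D$.

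For the upper bound I would run the regularity method on the majority colour. Apply the regularity lemma to, say, the red graph of $K_N$, pass to the reduced graph $R$ on the cluster set, and $2$-colour the edges of $R$ by density. Because $N$ is (asymptotically) $2n$, the reduced graph is dense enough that in one of the two colours---call it the ``good'' colour---$R$ contains a suitable connected structure: a connected, non-bipartite subgraph (for the odd case) with a long cycle, or more precisely a ``connected matching''-type structure long enough to absorb $n$ vertices, together with control on the odd girth so that one can route an odd cycle. This is the standard L\"uczak-type reduction: a long monochromatic cycle in $K_N$ corresponds to a long cycle through a connected matching in $R$. The extra feature we need is that the cluster structure supports not just $C_n$ but $C_n\cup D$: since $D$ has only $cn$ chords and $C_n\cup D$ has bounded maximum degree, $C_n\cup D$ is ``spreadable'' --- it has bounded bandwidth-like structure or at least can be split into $o(n)$ bounded-size pieces hanging off a long path --- so an embedding lemma for bounded-degree graphs into super-regular pairs (a blow-up/embedding lemma) lets us embed $C_n\cup D$ into the regular structure found in $R$, provided the host has the right parity. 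Finally, the $k$-almost-bipartiteness of $C_n\cup D$ is exactly what matches the ``odd capacity'' of the good monochromatic component: to embed a $k$-almost bipartite graph one needs $k$ independent ``odd-cycle-closing'' edges available in the reduced graph, and the count $N=r(C_n)+k-1$ is precisely the threshold guaranteeing that the majority colour's reduced graph has odd girth/independence structure permitting $k$ such closings; if it fails, the minority colour must itself carry a large enough monochromatic structure to host $C_n\cup D$ outright.

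The direction $(a)\Rightarrow(b)$ is the easy converse and follows the template already spelled out in the excerpt: assuming $r(C_n\cup D)=r(C_n)+k-1=2n-2+k$, run the extremal colouring on $K_{2n-3+k}$ (maximal cut plus $k-2$ cone vertices in blue); the hypothesis $(a)$ forces a monochromatic copy of $C_n\cup D$, it cannot be red (components too small), so it is blue, and the blue graph is $(k-1)$-almost bipartite, whence $C_n\cup D$ is at most $(k-1)$-almost bipartite; iterating / combining with the lower-bound construction pins it down to exactly $k$-almost bipartite. The main obstacle, and where essentially all the work lies, is the embedding step in the $(b)\Rightarrow(a)$ direction: one must show that the bounded-degree graph $C_n\cup D$ --- not just a bare cycle --- can be embedded into whatever monochromatic regular structure the counting argument produces, controlling parity via the $k$ independent odd edges, and ensuring the $cn$ chords of $D$ do not obstruct the embedding; this is where the bound $c$ on $|D|/n$ and the bound $\Delta$ on the maximum degree enter quantitatively, and where a careful Blow-up-Lemma-style argument or an iterative embedding along a Hamilton-path-like skeleton of $C_n\cup D$ is required.
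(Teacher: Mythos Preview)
Your broad architecture---regularity on $K_N$, a density-coloured reduced graph $R$, a structural dichotomy on $R$, then an embedding step---is the paper's, but the role you assign to $k$ is wrong and your easy-direction constructions are off. For the lower bound and for $(a)\Rightarrow(b)$ one must use the \emph{tripartite} colouring: partition $V(K_M)$ as $X\cup Y\cup Z$ with $|X|=|Y|=n-1$, edges inside parts red and between parts blue. Taking $|Z|=k-1$ on $M=2n-3+k$ vertices gives no monochromatic $C_n\cup D$ (the blue graph $K_{n-1,n-1,k-1}$ is only $(k-1)$-almost bipartite), which is the lower bound; taking $|Z|=k$ on $M=2n-2+k=r(C_n\cup D)$ vertices forces, under $(a)$, a blue copy, so $C_n\cup D$ is $k'$-almost bipartite for some $k'\le k$, and then $(b)\Rightarrow(a)$ for $k'$ excludes $k'<k$. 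Your cones are mutually blue-adjacent, so the blue graph contains a large clique and is not $(k-1)$-almost bipartite; and your $(a)\Rightarrow(b)$ runs on $K_{2n-3+k}$, where $(a)$ does \emph{not} guarantee a monochromatic copy.

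More importantly, in the paper $k$ plays \emph{no role} in the ``long monochromatic odd cycle in $R$'' branch of the dichotomy. There $C_n\cup D$ is decomposed into a small tripartite graph $H$ carrying all chords (of order $O(\eps^2 n)$, using $|D|\le cn$) together with a family of long odd paths; $H$ is embedded into a single regular triangle (or inside one cluster via the linear Ramsey bound for bounded-degree graphs) by the embedding lemma, and the long paths are routed through the cycle of regular pairs by a dedicated path-embedding lemma. The parameter $k$ enters only in the extremal branch, and at the level of $K_N$ rather than $R$: the odd-cycle stability theorem yields sets $A',B'\subset V(K_N)$ of size roughly $n$, almost red-complete inside and almost blue-complete between; if either can be padded to $n$ vertices with further red-heavy vertices one embeds $C_n\cup D$ in red directly, and otherwise a count leaves at least $k=N-2(n-1)$ vertices each with many blue edges to both $A'$ and $B'$. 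These $k$ vertices host the $k$ independent ``odd'' vertices of $C_n\cup D$, with the bipartite remainder embedded greedily in blue across $(A',B')$. Your ``$k$ odd-cycle-closing edges in $R$'' mechanism does not capture this: a single odd edge in the reduced graph, once blown up, already accommodates far more than $k$ odd vertices, so the exact threshold $N=2n-2+k$ cannot be read off the reduced-graph structure in the way you suggest.
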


Again, the implication $(a)\Rightarrow (b)$ of Theorem~\ref{main2} is straightforward: Suppose that $r(C_n\cup D)= r(C_n)+k-1=2(n-1)+k$ and $n$ is large (in particular, $n>k$). We partition the vertices of $K_{r(C_n\cup D)}$ into sets $X$, $Y$, and $Z$ such that $|X|=|Y|=n-1$ and $|Z|=k$ and colour all edges within $X$ or $Y$ or $Z$ red and the rest blue. Clearly, the red subgraph has no large enough component to contain $C_n\cup D$, so we find a blue copy of $C_n\cup D$ in $K_{2(n-1)+k}$. Since the blue subgraph is  $k$-almost bipartite, we find that $C_n\cup D$ is $k'$-almost bipartite for some $k'\leq k$. Now, if $k'<k$, then we can use the implication $(b)\Rightarrow (a)$ for $k'$ to obtain a~contradiction. Hence, $C_n\cup D$ is $k$-almost bipartite.

\medskip

Both our results are, apart from the value of $c$, best possible for every sufficiently large $\Delta$. To see this, we first recall the following result of Graham, R\"odl and Ruci\'nski:

\begin{theorem}[Graham, R\"odl and Ruci\'nski, \cite{grr_bip}]\label{thm:grr}
There exists a constant $c_1>1$ such that, for every $\Delta\geq 1$ and for every
$n\geq\Delta+1$ (except for $\Delta=1$ and $n=2,3,5$), there exists a bipartite graph $H$ with $n$ vertices and maximum degree at most $\Delta$ which satisfies $r(H)>c_1^\Delta n$.
\end{theorem}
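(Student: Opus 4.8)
The plan is to exhibit, for each admissible pair $(\Delta,n)$, a bipartite graph $H$ on $n$ vertices with $\Delta(H)\le\Delta$ together with a $2$-colouring of a complete graph on more than $c_1^\Delta n$ vertices having no monochromatic copy of $H$; throughout I use that $n\ge\Delta+1$, hence $\Delta<n$. One may aim for any absolute constant $c_1<2^{1/4}$, a finite computation disposing of the small pairs $(\Delta,n)$ (which is where the listed exceptions come from). For $\Delta=1$ take $H$ to be a maximum matching (together with one isolated vertex if $n$ is odd); the classical value $r(mK_2)=3m-1$ gives $r(H)\ge 3\lfloor n/2\rfloor-1>c_1n$ for every $c_1<3/2$ and every $n\ge 4$.

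For $\Delta\ge 2$ and $\Delta+1\le n\le 2\Delta$ I would take $H=K_{\lceil n/2\rceil,\lfloor n/2\rfloor}$, whose maximum degree $\lceil n/2\rceil$ is at most $\Delta$. Colouring $E(K_M)$ uniformly at random, the expected number of monochromatic copies of $H$ equals the number of copies of $H$ in $K_M$ times $2^{1-e(H)}$; since $|\mathrm{Aut}(H)|\ge\lfloor n/2\rfloor!\,\lceil n/2\rceil!$, that number of copies is at most $M^n/|\mathrm{Aut}(H)|$, so the expectation is below $1$ as soon as $M<|\mathrm{Aut}(H)|^{1/n}\,2^{(e(H)-1)/n}$. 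The right-hand side is of order $n\cdot 2^{n/4}$, which exceeds $c_1^\Delta n$ because $\Delta<n$; this settles the theorem when $\Delta+1\le n\le 2\Delta$.

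The substantial range is $n$ much larger than $\Delta$, and here a plain random colouring is useless: even after exploiting all of $\mathrm{Aut}(H)$, the first-moment (equivalently, Lov\'asz Local Lemma) threshold is only of order $2^{e(H)/v(H)}\le 2^{\Delta/2}$, with no dependence on $n$, since it is controlled by the edge density $e(H)/v(H)$ of $H$ rather than by its order; for the same reason the extra linear factor cannot come from randomness alone. I would therefore make $H$ globally rigid. Fix a hard local piece $H_0$ --- a bipartite graph on $\Theta(\Delta)$ vertices with $\Delta(H_0)\le\Delta$ and $r(H_0)>c^\Delta$ for an absolute $c>1$, produced exactly as in the previous paragraph (e.g.\ $H_0=K_{\lfloor\Delta/2\rfloor,\lceil\Delta/2\rceil}$) --- and glue $k=\Theta(n/\Delta)$ copies of $H_0$ along a bounded-degree skeleton on $k$ vertices, such as a path or a bounded-degree expander, so that $H$ is connected and bipartite, has $n$ vertices and maximum degree at most $\Delta$, and still contains $k$ vertex-disjoint copies of $H_0$ that are linked by the skeleton. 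For the colouring, take $N$ of order $c_1^\Delta n$, partition $V(K_N)$ into blocks of size $r(H_0)-1$, colour inside each block by a fixed colouring with no monochromatic $H_0$, and colour between blocks by blowing up over the blocks a colouring of the block graph that has no copy of the skeleton. A monochromatic copy of $H$ would have to realise a monochromatic $H_0$ inside each of its $k$ pieces; but no $H_0$ fits in a single block and no copy of $H$ fits in a bounded number of blocks, while the inter-block colouring is incompatible with laying the pieces out along the skeleton --- a contradiction, which yields $r(H)>c_1^\Delta n$ after possibly decreasing $c_1$.

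The main obstacle is exactly this last construction. ``Low-complexity'' global structure does not help on its own: a \emph{disjoint} union of $k$ copies of $H_0$ has Ramsey number only linear in $k\,v(H_0)$ with an absolute constant, since it is governed by a packing argument and not by $r(H_0)$. So the skeleton must be rigid enough to stop the $k$ monochromatic pieces from being crowded into a region smaller than $H$ requires, yet sparse enough to keep $\Delta(H)\le\Delta$; and the block size $r(H_0)-1$ must be balanced against the number of blocks so that the inter-block colouring really does obstruct the skeleton. Reconciling these requirements is where the argument of Graham, R\"odl and Ruci\'nski does its real work.
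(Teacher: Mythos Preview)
The paper does not prove this theorem at all: it is quoted from Graham, R\"odl and Ruci\'nski~\cite{grr_bip} as a black box, used only to show that Theorems~\ref{main1} and~\ref{main2} are essentially best possible. So there is no proof in the paper to compare against.

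That said, your proposal is not a proof but an outline that explicitly stops short of the main point. You yourself note that the disjoint union of $k$ copies of $H_0$ has Ramsey number only linear in $k\,|H_0|$ with an absolute constant, so the ``skeleton'' must do real work; but the block colouring you describe does not obviously obstruct $H$. A monochromatic copy of $H$ need not place each $H_0$-piece inside a single block --- pieces may straddle blocks, and then the intra-block colourings give you no control. Your claim that ``no copy of $H$ fits in a bounded number of blocks, while the inter-block colouring is incompatible with laying the pieces out along the skeleton'' is exactly the assertion that needs proving, not an argument. The actual proof in~\cite{grr_bip} does not go via such a decomposition at all: it takes $H$ to be a \emph{random} bipartite graph of bounded degree and shows, via a counting/entropy argument, that any $2$-coloured $K_N$ with $N$ of order $c_1^\Delta n$ fails to contain a monochromatic copy with positive probability. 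The linear factor $n$ and the exponential factor $c_1^\Delta$ come out together from this probabilistic analysis, not from a local-piece-plus-skeleton construction.
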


Let $\Delta\geq 3$ be such that $c_1^{\Delta-2}\geq 4$,  and let $n$ be even and larger than~$2(\Delta+2)$. Then Theorem~\ref{thm:grr} assures that there is a bipartite graph~$H$ on $n/2>5 $ vertices and of maximum degree at most $\Delta-2$ such that $r(H)>c_1^{\Delta-2} (n/2)\geq 2n$. Clearly, we can add $H$ into $C_n$ respecting the natural bipartition of $C_n$. Denote the set of chords by $D$ and note that $\Delta (C_n\cup D)\leq \Delta$ and $|D|\leq e(H)\leq \tfrac12 (\Delta-2) \tfrac{n}{2}<(\Delta/4) n$. On the other hand, we have $$r(C_n)\ =\ 3n/2-1\ <\  2n\ \leq \ r(H)\ \leq\ r(C_n\cup D).$$
Hence, this construction shows that the function from Condition~\eqref{thm1cond1} of Theorem~\ref{main1} cannot be improved to anything above $\Delta n/4$.

A similar construction works for large odd $n$. In this case, let $\Delta\geq 3$ be such that $c_1^{\Delta-2}\geq 6$, $n>\max\{2\Delta, 6k+1\}$,
 and let $H$ be a bipartite graph on $(n+1)/2$ vertices with $\Delta(H)\leq \Delta-2$ and  $r(H)>c_1^{\Delta-2} (n+1)/2\geq 3(n+1)$.

Let $v_1,v_2,\dots, v_n$ be the (consecutive) vertices of $C_n$. We add $k$ chords $v_1v_3$, $v_4v_6,\dots, v_{3k-2}v_{3k}$ and add $H$ into the path $v_{3k+1},v_{3k+2},\dots, v_n$, respecting its natural bipartition. Denote the set of thus created chords by~$D$. Then $C_n\cup D$ had maximum degree $\Delta$, it is $k$-almost bipartite because it contains $k$ vertex-disjoint triangles $v_1v_2v_3$, $v_4v_5v_6,\dots, v_{3k-2}v_{3k-1}v_{3k}$ and removing vertices $v_1, v_4,\dots, v_{3k-2}$ from $C_n\cup D$ yields a bipartite graph, and $|D|\leq e(H)+k\leq \tfrac12(\Delta-2) \tfrac{n}{2}+k< \tfrac{\Delta}{4}n$. 
On the other hand, we have 
$$r(C_n)+k-1\ =\ 2n+k-2\ <\  3(n+1)\ \leq \  r(H)\ \leq \ r(C_n\cup D).$$

\medskip

Furthermore, Theorem \ref{thm:grr} also implies that Theorems \ref{main1} and \ref{main2} cannot be true for non-constant $\Delta$. Suppose that $\Delta= \Delta(n)\to\infty$ as $n\to\infty$ arbitrarily slowly.  Note that we may assume that $2\log n>\Delta$, because for faster growing $\Delta$, or more precisely, for all $\Delta \geq 2\log n$, we can add $K_{\log n, \log n}$ to $C_n$ so that the resulting graph has maximum degree at most $\Delta$ and it is bipartite, but its Ramsey number is at least $r(K_{\log n, \log n})>2n$ for large $n$ (cf.~Chung and Graham~\cite{ChuGra}).

Now, let $n_0$ be such that $c_1^\Delta/\Delta^2 >2$ for every $n>n_0$. Let $H$ be a bipartite graph on $n/\Delta^2>\Delta+1$ vertices and of maximum degree at most~$\Delta$ such that $r(H)>c_1^\Delta (n/\Delta^2)>2n$. Again, we can add $H$ into $C_n$ respecting the natural bipartition of $C_n$. Denote the set of chords by $D$ and note that $|D|\leq e(H)\leq \tfrac12(\Delta \tfrac{n}{\Delta^2})=o(n)$. On the other hand, we have $$r(C_n)\ =\ 3n/2-1\ \leq\ 2n\ <\ r(H)\ \leq \ r(C_n\cup D).$$

\section{Preliminaries}
\subsection{Notation and Two Facts}

We use standard graph-theoretic notation.  For a graph $G$, let $V(G)$ denote the set of its vertices and $E(G)$ the set of its edges. We denote by $|G|$ and $e(G)$ the number of vertices and edges of $G$. Given a set $A\subset V(G)$ of vertices and a set $F\subset E(G)$ of edges, $G[A]$ stands for the subgraph of $G$ induced by the vertices of $A$, and $G-F$ is the subgraph of $G$ with vertex set $V(G)$ and edge set $E(G)\setminus F$.
 
 Given two disjoint sets of vertices, $A$ and $B$, we write $E(A,B)$ for the set of edges with one endpoint in $A$ and the other in $B$, we set $e(A,B) := |E(A,B)|$, and call $G[A,B]$ the bipartite subgraph of $G$ with bipartition $A, B$ and edge set $E(A,B)$. The quantity $d(A,B):=d(A,B)/|A||B|$ is the \textit{density}\/ of the pair $(A,B)$. For a vertex $a$ and a set $B$, we denote by $N(a)$ the set of all vertices adjacent to $a$ and set $\deg(a):=|N(a)|$.

Next, let us clarify some notation on paths: The {\it length} of a path is the number of edges it contains. An {\em odd path} is a path of odd length, an {\em even path} is a path of even length. A path is {\it trivial} if it consists of one vertex only. An {\em $x$--$y$ path} is a path starting at vertex $x$ and ending at vertex $y$. If $x$ and $y$ are vertices of some path $P$, then we denote by $xPy$ the subpath of $P$ starting at $x$ and ending at $y$. If $P$ is a path with one endvertex in a set $A$ and the other one in a set $B$ and does not meet $A\cup B$ otherwise, then we say that $P$ is an {\em $A$--$B$ path}. For a set $A$, a non-trivial $A$--$A$ path is also called an {\em $A$-path}. 

\medskip

We finish this subsection by quickly proving two easy facts, which will be needed in Section~\ref{preparing}.

\begin{fact}\label{bipa}
 Let $G$ be a bipartite graph, and let $H\subseteq G$ be such that all $V(H)$-paths of $G-E(H)$ are odd. Then $H$ has a bipartition $U_1, U_2$ of its vertex set so that each $V(H)$-path in $G-E(H)$ is a $U_1$--$U_2$ path.
\end{fact}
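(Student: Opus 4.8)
The plan is to work component by component in $H$, and within each component to use the bipartition inherited from $G$, checking that the hypothesis on $V(H)$-paths forces a consistent choice. Fix a 2-colouring $c\colon V(G)\to\{0,1\}$ witnessing that $G$ is bipartite. First I would consider a single connected component $K$ of $H$. Since $K\subseteq G$, the colouring $c$ restricted to $V(K)$ is a proper 2-colouring of $K$, so it splits $V(K)$ into two classes $C_0^K=c^{-1}(0)\cap V(K)$ and $C_1^K=c^{-1}(1)\cap V(K)$; because $K$ is connected, this is the unique proper 2-colouring of $K$ up to swapping the two classes. Now I would take an arbitrary $V(H)$-path $P$ in $G-E(H)$ with endvertices $x,y$ and ask how the classes of $x$ and $y$ relate. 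Since $P$ is a path in $G$, the colouring $c$ alternates along it, so $c(x)=c(y)$ if and only if $P$ has even length; by hypothesis every such path is odd, hence $c(x)\ne c(y)$. In other words, relative to the \emph{global} colouring $c$, every $V(H)$-path in $G-E(H)$ already joins colour class $0$ to colour class $1$.

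The remaining point is to package this into a single bipartition $U_1,U_2$ of $V(H)$ that works simultaneously for all $V(H)$-paths, including those whose endpoints lie in different components of $H$. But this is immediate once we use the global colouring: set $U_1:=c^{-1}(0)\cap V(H)$ and $U_2:=c^{-1}(1)\cap V(H)$. Then $U_1,U_2$ is a bipartition of $V(H)$ (it need not be a bipartition witnessing that $H$ is bipartite in the sense of having no edges inside a class — wait, it is: $c$ is proper on all of $G\supseteq H$, so no edge of $H$ lies inside $U_1$ or inside $U_2$), and by the previous paragraph every $V(H)$-path in $G-E(H)$ has one endvertex of each colour, i.e.\ is a $U_1$--$U_2$ path. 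This is exactly the claimed conclusion.

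I do not expect a serious obstacle here: the statement is essentially the observation that ``$V(H)$-paths are odd'' is exactly the condition needed for the ambient bipartition of $G$ to restrict compatibly to $H$. The only mild subtlety to be careful about is the phrase ``$V(H)$-path'': such a path has both endvertices in $V(H)$ but is otherwise disjoint from $V(H)$, and it lives in $G-E(H)$, so it may still use edges of $G$ between two vertices of $V(H)$ provided they are not edges of $H$ — none of this affects the parity argument, since we only ever use that $P$ is a path of $G$ and that $c$ alternates along any path of $G$. One should also note the trivial case: if $H$ has no $V(H)$-paths in $G-E(H)$ at all, any bipartition of $V(H)$ into the two colour classes of $c$ still works vacuously, so the statement holds in all cases.
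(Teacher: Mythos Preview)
Your proposal is correct and takes essentially the same approach as the paper: restrict a global bipartition of $G$ to $V(H)$ and use that odd paths in a bipartite graph have their endpoints in opposite classes. The component-by-component discussion you begin with is unnecessary (as you yourself discover midway), since the global colouring $c$ already does all the work; the paper's proof is just the two-sentence version of your second paragraph.
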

\begin{proof}
 Take any bipartition of $G$ and consider its restriction $U_1, U_2$ to the vertices of $H$. Since the endpoints of every odd path cannot be in the same partite set, the statement of the fact follows.
\end{proof}

 \begin{fact}\label{tripa}
 Let $H', G$ be a tripartite graphs  such that $G$ is obtained from $H'$ by adding at most $|H'|$ pairwise internally vertex disjoint $V(H')$-paths, each of odd length at least $m$, $m\geq 3$. 

  Then there is a graph $H$ with $H'\subseteq H\subseteq G$ and $|H|\leq 3|H'|$ which has a tripartition $U_1, U_2, U_3$ of its vertex set such that each $V(H)$-path in $G-E(H)$ is an odd $U_1$--$U_2$ path of length at least $m-2$.
 \end{fact}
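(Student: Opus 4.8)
The plan is to obtain $H$ by absorbing into $H'$, for each of the at most $|H'|$ added paths, exactly two of its interior vertices — two consecutive ones near one endpoint, or one near each endpoint — together with the edges joining them to $H'$. What then remains of the path in $G-E(H)$ is a single subpath, shortened by two edges (so still of odd length, and of length at least $m-2$), and it remains to extend a proper $3$-colouring of $H'$ to the absorbed vertices so that each such shortened subpath runs between the first two colour classes.

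Concretely, I would start from a proper $3$-colouring $\phi\colon V(H')\to\{1,2,3\}$, which exists because $H'$ is tripartite, and process a typical added path $P=p_0p_1\cdots p_\ell$ with $p_0,p_\ell\in V(H')$, $\ell=\ell_P\geq m\geq 3$ odd, $p_1,\dots,p_{\ell-1}\notin V(H')$ (recall these interiors are pairwise disjoint across the added paths) according to the value of $\phi(p_\ell)$. If $\phi(p_\ell)\in\{1,2\}$, I put $p_1,p_2$ and the edges $p_0p_1,p_1p_2$ into $H$, set $\phi(p_2)$ to the colour of $\{1,2\}\setminus\{\phi(p_\ell)\}$, and pick $\phi(p_1)$ in the non-empty set $\{1,2,3\}\setminus\{\phi(p_0),\phi(p_2)\}$; the surviving subpath $p_2\cdots p_\ell$ of $G-E(H)$ is then odd of length $\ell-2$ with endpoints coloured $\phi(p_2)$ and $\phi(p_\ell)$, i.e.\ the two colours of $\{1,2\}$. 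If $\phi(p_\ell)=3$, I put $p_1,p_{\ell-1}$ and the edges $p_0p_1,p_{\ell-1}p_\ell$ into $H$ and give $p_1,p_{\ell-1}$ the colours $1,2$, choosing the assignment so that $\phi(p_1)\neq\phi(p_0)$ (if $\phi(p_0)=3$ either works; otherwise let $\phi(p_1)$ be the colour of $\{1,2\}$ distinct from $\phi(p_0)$, so $\phi(p_{\ell-1})=\phi(p_0)\neq 3=\phi(p_\ell)$). Either way the surviving subpath is odd of length $\ell-2\geq m-2$ with endpoints in colour classes $1$ and $2$, and since each newly absorbed vertex is adjacent in $H$ only to vertices of $P$, properness of $\phi$ is immediate.

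Doing this for every added path yields $H$ with $H'\subseteq H\subseteq G$, $|H|\leq|H'|+2|H'|=3|H'|$, and a proper $3$-colouring whose classes $U_1,U_2,U_3$ form the tripartition. The remaining point is that the $V(H)$-paths of $G-E(H)$ are exactly the surviving subpaths $Q_P$ built above. The edge set of $G-E(H)$ is the disjoint union over the added paths of the edges of $Q_P$; distinct $Q_P$'s can share only vertices of $V(H')\subseteq V(H)$, and inside a single $Q_P$ the only vertices lying in $V(H)$ are its two endpoints. Hence a $V(H)$-path in $G-E(H)$ cannot use edges of two different $Q_P$'s (it would contain an internal vertex of $V(H)$) nor be a proper subpath of a single $Q_P$; so these paths are precisely the $Q_P$, each an odd $U_1$--$U_2$ path of length $\ell_P-2\geq m-2$, as required.

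The only genuine subtlety — the step I would be most careful with — is the colour bookkeeping for the absorbed vertices: one must check that for every pair of values $\phi(p_0),\phi(p_\ell)$ there is a legal extension of $\phi$ making the surviving subpath a $U_1$--$U_2$ path. A uniform rule that removes one edge from each end of every path does \emph{not} suffice, since when $\phi(p_0)=\phi(p_\ell)\in\{1,2\}$ it forces both new endpoints into $\{1,2\}\setminus\{\phi(p_0)\}$, i.e.\ to the same colour; cutting two edges off the $p_0$-end instead is then legal precisely because $\phi(p_\ell)\in\{1,2\}$, and it repairs the problem. Everything else — the size bound $|H|\leq 3|H'|$, the oddness and length of the $Q_P$, and the disjointness used to characterise the $V(H)$-paths — is routine.
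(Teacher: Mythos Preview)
Your proof is correct and follows essentially the same idea as the paper's: absorb two interior vertices from each added path into $H$, shortening it by two edges to an odd $U_1$--$U_2$ subpath of length at least $m-2$. The paper always removes the second and second-to-last vertex (one from each end) and only processes paths with an endpoint in $U'_3$; your case split, which sometimes removes two \emph{consecutive} vertices from one end, is actually more careful in that it also handles paths whose endpoints both lie in the same class $U'_i$, $i\in\{1,2\}$ --- a case the paper's written proof does not explicitly address and which, as you correctly observe, the one-from-each-end rule cannot repair.
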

  \begin{proof}
Let $H'$ have colour classes $U'_1$, $U'_2$ and $U'_3$. For each $V(H')$-path $P=abc\ldots xyz$ in $G-E(H')$ that has at least one endvertex ($a$ or $z$) in $U'_3$, we add both $b$ and $y$ to $H'$. More precisely, if $a,z\in U'_3$, then we add $b$ to $U'_1$ and $y$ to $U'_2$, and if only one of $a$, $z$ lies in $U'_3$, say $a\in U'_3$, and $z\in U'_i$, $i\in \{ 1,2\}$, then we add $b$ to $U'_i$ and $y$ to $U'_{3-i}$.
Call the obtained tripartite graph~$H$, with partition classes  $U_1$, $U_2$ and $U_3$. Clearly, the $V(H)$-paths in $G-E(H)$ all have odd length $\geq m-2$, and since $H$ is obtained from $H'$ by adding two vertices from each $V(H')$-path, we also have $|H|\leq 3|H'|$.
\end{proof}
 
\subsection{Structural tools}

We now present some structural results we will use, and which come from~\cite{allen, ErdGall59, cycle-stab}. We also prove a corollary of one of these theorems, which will be tailor-made for our purposes.

The first two are well-known theorems of Erd\H os and Gallai.

\begin{theorem}[Erd\H{o}s, Gallai,~\cite{ErdGall59}]\label{thm:eg}
Every graph $G$ with at least $(n-1)(|G|-1)/2+1$ edges
 contains a cycle of length at least $n\geq 3$.
\end{theorem}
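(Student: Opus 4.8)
The plan is to prove the slightly cleaner equivalent statement that any $G$ with $e(G)>(n-1)(|G|-1)/2$ contains a cycle of length at least $n$, and to do this by induction on $|G|$ via three reductions that leave a single genuinely non-trivial case.

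\emph{Reduction~1 (small graphs).} If $|G|\le n-1$, then $e(G)\le\binom{|G|}{2}=|G|(|G|-1)/2\le(n-1)(|G|-1)/2$, so the hypothesis fails and there is nothing to prove; hence we may assume $|G|\ge n\ge 3$. \emph{Reduction~2 (low-degree vertex).} If some vertex $v$ has $\deg_G(v)\le(n-1)/2$, delete it: then $e(G-v)=e(G)-\deg_G(v)>(n-1)(|G|-1)/2-(n-1)/2=(n-1)(|G-v|-1)/2$, so the induction hypothesis applied to $G-v$ produces the required cycle; hence we may assume $\delta(G)>(n-1)/2$, which for either parity of $n$ forces $\delta(G)\ge n/2$, so $2\delta(G)\ge n$. \emph{Reduction~3 (connectivity).} If $G$ is not $2$-connected, let $B_1,\dots,B_t$ be its blocks (isolated vertices contribute nothing and are dropped). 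Since $\sum_i e(B_i)=e(G)$ and $\sum_i(|B_i|-1)=|G|-(\text{number of components})\le|G|-1$, averaging gives a block $B$ with $e(B)>(n-1)(|B|-1)/2$; as $G$ is not $2$-connected we have $|B|<|G|$, and the induction hypothesis applied to $B$ finishes this case. (Here $B$ cannot be a single edge, since that would require $1>(n-1)/2$, impossible for $n\ge 3$.)

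After these reductions $G$ is $2$-connected with $|G|\ge n$ and $\delta(G)\ge n/2$. At this point I would invoke the classical theorem of Dirac that every $2$-connected graph $H$ has circumference at least $\min\{|H|,2\delta(H)\}$; applied to $G$ this yields a cycle of length at least $\min\{|G|,2\delta(G)\}\ge\min\{n,n\}=n$, completing the induction. If one wants this ingredient self-contained, it is proved by taking a longest cycle $C$, assuming $|C|<\min\{|G|,2\delta(G)\}$, using $2$-connectivity (Menger) to attach to $C$ a $C$-path through a vertex outside $C$, and deriving a contradiction from a pigeonhole count on the neighbourhoods of the two endpoints of that path along the two arcs of $C$.

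The first three reductions are routine bookkeeping; the one real obstacle is this last $2$-connected dense case, i.e.\ bounding the circumference of a $2$-connected graph below in terms of its minimum degree, which genuinely needs the rotation/extension argument above rather than any averaging or degree-counting shortcut. A minor technical caveat is that $(n-1)(|G|-1)/2$ may be a half-integer, so it is cleanest to carry the strict-inequality form of the hypothesis through the induction (it is implied by the stated ``$+1$'' form, which is all we need).
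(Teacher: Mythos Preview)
The paper does not prove this statement; Theorem~\ref{thm:eg} is quoted from~\cite{ErdGall59} as a classical tool and no argument is given. So there is nothing in the paper to compare your proof against.

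Your argument is correct. The three reductions are sound: the edge count is vacuous for $|G|\le n-1$; deleting a vertex of degree at most $(n-1)/2$ preserves the strict inequality; and for a graph that is not $2$-connected the identity $\sum_i(|B_i|-1)=|G|-(\text{number of components})\le |G|-1$ together with $\sum_i e(B_i)=e(G)$ yields by averaging a block $B$ with $e(B)>(n-1)(|B|-1)/2$ and $|B|<|G|$. The remaining case, $G$ $2$-connected with $|G|\ge n$ and $\delta(G)\ge n/2$, is exactly the hypothesis of Dirac's circumference theorem (every $2$-connected $H$ has a cycle of length at least $\min\{|H|,2\delta(H)\}$), and your invocation of it is legitimate; your sketch of its proof via a longest cycle, a $C$-path supplied by $2$-connectivity, and a pigeonhole on the neighbours of the two attachment points along the arcs of $C$ is the standard one. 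The caveat about carrying the strict-inequality form through the induction is well taken and handled correctly.
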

\begin{theorem}[Erd\H{o}s, Gallai,~\cite{ErdGall59}]\label{thm:eg2}
 Let $G$ be a graph with $v(G)\geq 3$ and minimum degree at least $(|G|+1) /2$. Then, for every two vertices $u$ and~$v$, there exists an $u$--$v$-path containing all vertices of $G$.
\end{theorem}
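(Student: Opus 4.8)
My plan is to treat this as the Hamilton-connectedness statement it is: a Dirac-type degree bound, only marginally above the one in Dirac's theorem, forcing a Hamiltonian path between \emph{every} prescribed pair of vertices. One cannot simply invoke Dirac's theorem, which only yields a Hamiltonian cycle (hence a Hamiltonian path between \emph{some} pair of vertices) with no control over the endpoints; the content is precisely that the endpoints are given in advance. I would prove it by a closure argument in the spirit of Ore and Bondy--Chv\'atal, reducing the problem to the complete graph, and this route does not even need the earlier Erd\H{o}s--Gallai circumference theorem.

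The key step is a closure lemma: \emph{if $x\neq y$ are non-adjacent vertices of an $n$-vertex graph $G$ with $\deg(x)+\deg(y)\geq n+1$, then for all $u,v$ the graph $G$ has a Hamiltonian $u$--$v$ path if and only if $G+xy$ does.} One direction is trivial. For the other, I would take a Hamiltonian $u$--$v$ path $P=w_1\cdots w_n$ of $G+xy$ (with $w_1=u$, $w_n=v$); if $P$ avoids the edge $xy$ we are done, so assume $\{w_j,w_{j+1}\}=\{x,y\}$ for some $j$. Deleting this edge splits $P$ into a piece starting at $u$ and a piece ending at $v$, and the goal is to re-splice the two pieces using a genuine edge of $G$. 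A crossing-edge count does this: on each of the two halves of $P$ one records the positions at which $x$ reaches ``across'' in one direction and $y$ in the other, producing four index sets whose sizes sum to at least $\deg(x)+\deg(y)-2\geq n-1$, whereas the positions available number only $n-2$ when no admissible crossing exists; hence two of the sets meet, and the corresponding crossing rebuilds a Hamiltonian path of $G$ still running from $u$ to $v$ (the surgery only reverses internal segments of $P$). The boundary cases $x=u$ and $y=v$, as well as $\{x,y\}=\{u,v\}$, are handled by the same count with the index ranges shifted (and are in fact slacker).

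Granting the lemma, the theorem follows at once. Since $\delta(G)\geq (n+1)/2$ and adding edges only raises degrees, at every stage any two non-adjacent vertices have degree sum at least $n+1$; so I add non-edges one at a time, each step preserving ``there is a Hamiltonian $u$--$v$ path'' as an equivalence, until the graph becomes $K_n$. As $n=|G|\geq 3$, the graph $K_n$ trivially has a Hamiltonian $u$--$v$ path for every pair $u,v$, so $G$ does too.

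The part I expect to need care is the bookkeeping in the closure lemma. In the classical Bondy--Chv\'atal lemma the closure edge joins the two ends of a Hamiltonian path, so there is no constraint to preserve; here the prescribed endpoints $u$ and $v$ must survive the surgery, which forces one to split $P$ wherever the closure edge happens to lie and to arrange the four index sets so that \emph{every} admissible crossing yields a $u$--$v$ path rather than a path between two other vertices. (A more classical alternative, closer to Erd\H{o}s--Gallai, would be to argue directly with a longest $u$--$v$ path and absorb a vertex lying off it; but holding the endpoints fixed while rotating makes that at least as delicate.)
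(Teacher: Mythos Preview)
The paper does not give a proof of this theorem at all: it is quoted as a classical result of Erd\H{o}s and Gallai and used as a black box, so there is nothing to compare your argument against.

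That said, your approach is sound. What you are invoking is exactly the Bondy--Chv\'atal closure for Hamilton-connectedness (threshold $n+1$ rather than $n$), and the crossing count you sketch does go through. For the record, the clean version of the count is: with $x=w_j$, $y=w_{j+1}$, set
\[
S_1=\{i\le j-1:xw_i\in E\},\quad S_2=\{i\ge j+2:xw_i\in E\},\quad T_1,T_2\text{ analogously for }y.
\]
If no admissible crossing exists in the right segment then $S_2$ and $T_2-1$ are disjoint subsets of $\{j+1,\dots,n\}$, giving $|S_2|+|T_2|\le n-j$; symmetrically $|S_1|+|T_1|\le j$ from the left segment. Summing yields $\deg_G(x)+\deg_G(y)\le n$, contradicting $\ge n+1$. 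So your ``$n-1$ versus $n-2$'' should really read ``$n+1$ versus $n$''; the slack you introduced by subtracting $2$ is unnecessary (since $xy\notin E(G)$ the degrees already exclude that edge), but the inequality still points the right way, and the boundary cases $x\in\{u,v\}$ or $y\in\{u,v\}$ fall out of the same count with one of the two halves empty. Iterating the closure up to $K_n$ then finishes exactly as you say.
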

  
 The next result is a recent one on embedding bounded degree graphs.

\begin{lemma}[Allen, Brightwell, Skokan, \cite{allen}]\label{GenEmbed} 
 Given a natural number $\Delta \ge 1$ and any $0<\eps<1/(\Delta^2+4)$, 
 let $F$ be an $n$-vertex graph in which every vertex has degree at 
 least $3\Delta\eps n$, and all but at most $\eps n$ vertices have 
 degree at least $(1-2\eps)n$. Let $J$ be any $n$-vertex graph with 
 $\Delta(J)\leq\Delta$. Then $J\subset F$.
\end{lemma}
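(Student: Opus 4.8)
The plan is to embed $J$ into $F$ by a careful greedy procedure, treating separately the small set of low-degree (``bad'') vertices of $F$.  Write $W=\{v\in V(F):\deg_F(v)<(1-2\eps)n\}$, so that $|W|\le\eps n$, and put $F'=F-W$.  Every vertex of $F'$ has at most $2\eps n$ non-neighbours in $F$, hence in $F'$, so $F'$ is almost complete: $\Delta(\overline{F'})\le 2\eps n$.  The essential difficulty is that $|F|=|J|=n$, so the embedding must be a bijection and every vertex of $W$ has to be used as an image; we cannot simply avoid the bad vertices.

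First I would fix the preimage of $W$.  Since $\Delta(J)\le\Delta$, the square $J^{2}$ has maximum degree at most $\Delta^{2}$, so $\alpha(J^{2})\ge n/(\Delta^{2}+1)$; as $\eps<1/(\Delta^{2}+4)<1/(\Delta^{2}+1)$ this exceeds $\eps n\ge|W|$, and we may choose a set $I\subseteq V(J)$ with $|I|=|W|$ whose vertices are pairwise at distance at least $3$ in $J$.  Fix any bijection $\phi\colon I\to W$.  The distance condition ensures that every vertex of $V(J)\setminus I$ has at most one neighbour in $I$, so it is forced into a single (possibly small) neighbourhood $N_F(w)$ rather than into an intersection of several such sets.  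Next I would embed the neighbourhood $T=N_J(I)$, of size at most $\Delta|W|$: processing its vertices one at a time, send $u\in N_J(v)$ (with $v\in I$) to an unused vertex of $N_F(\phi(v))\setminus W$ that is adjacent to the images of all previously embedded neighbours of $u$ in $T$.  A short count --- $|N_F(\phi(v))\setminus W|\ge(3\Delta-1)\eps n$, at most $\Delta-1$ already-embedded neighbours each forbidding at most $2\eps n$ vertices, and at most $\Delta\eps n$ vertices in use --- shows a valid image always exists.

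It then remains to embed $R=V(J)\setminus(I\cup T)$ bijectively onto the (all-good) set $F^{*}=(V(F)\setminus W)\setminus\phi(T)$, respecting the edges inside $R$ and those from $R$ to $T$.  This is a \emph{spanning} embedding, and it is where I expect the main obstacle: a naive greedy order breaks down at the final vertex, whose image would have to be adjacent to the images of all its already-embedded neighbours while only one host vertex is still free.  I would deal with this in the standard way.  Fix at the outset a small independent set $R_0\subseteq R$, $|R_0|=\rho n$ with $\rho=\Theta(\Delta\eps)$ (possible since $\alpha(J[R])\ge|R|/(\Delta+1)$), so $R_0$ spans no edges; embed $R\setminus R_0$ greedily into $F^{*}$ --- each vertex to an unused vertex adjacent to the images of its at most $\Delta$ already-embedded neighbours, which exists by a count like the one above as long as at least $\rho n$ host vertices are kept free --- and then complete the embedding by choosing a perfect matching between $R_0$ and the $\rho n$ leftover vertices of $F^{*}$ in the bipartite ``compatibility'' graph $B$, where $r$ is joined to $x$ whenever $x$ is adjacent in $F$ to the images of all neighbours of $r$.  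Because each vertex of $R_0$ has at most $\Delta$ neighbours, all embedded at good vertices, and each good vertex has at most $2\eps n$ non-neighbours, $B$ has minimum degree larger than half on each side, so Hall's condition holds and the desired matching exists.  The genuinely delicate part of the argument is the bookkeeping --- the sizes of $I$, $T$ and $R_0$, the greedy counts, and the minimum degree of $B$ all have to be consistent simultaneously --- and extracting the precise constant $\eps<1/(\Delta^{2}+4)$ requires a more economical version of the endgame than the crude sketch given here.
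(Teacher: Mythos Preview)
The paper does not prove this lemma: it is quoted verbatim from the cited reference of Allen, Brightwell and Skokan and used as a black box. There is therefore no proof in the present paper to compare your attempt against.

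That said, your sketch is the standard and essentially correct strategy for results of this type: reserve a $2$-independent set $I$ in $J$ as the preimage of the low-degree set $W$ (so that no vertex outside $I$ is forced into two small neighbourhoods at once), greedily embed $N_J(I)$ into the good part of $F$, and then handle the remaining spanning embedding with a buffer independent set and a Hall-type matching at the end. Your counts for the greedy steps are right, and you correctly flag the one genuine issue, namely that the crude version of the endgame needs $\eps$ of order $1/(\Delta^{2})$ but with a worse multiplicative constant than $1/(\Delta^{2}+4)$; recovering the exact threshold requires the more careful bookkeeping carried out in the original reference.
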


We also need a well-known result of Graham, R\"odl and Ruci\'nski on linear bounds for Ramsey numbers of bounded degree graphs.
 
\begin{theorem}[Graham, R\"odl, Ruci\'nski,~\cite{grr}]\label{linram}
There exists a constant $c_2>1$ such that for all $\Delta\geq 2$ and all
$n\geq\Delta+1$, every graph $H$ with $n$ vertices and maximum degree at most $\Delta$ satisfies $r(H)\leq 2^{c_2\Delta(\log\Delta)^2} n$.
\end{theorem}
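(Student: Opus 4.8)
\medskip\noindent\textbf{Proof plan.}
The plan is to show that, with $N=2^{c_2\Delta(\log\Delta)^2}n$, every red/blue colouring of $E(K_N)$ contains a monochromatic copy of $H$. One of the colours, say red, carries at least $\tfrac14N^2$ edges, so it suffices to embed $H$ into the red graph $G$. The only features of $H$ we use are that $\Delta(H)\le\Delta$ forces $\chi(H)\le\Delta+1$, and that $V(H)$ admits an ordering $v_1,\dots,v_n$ in which each $v_i$ has at most $\Delta$ neighbours among $v_1,\dots,v_{i-1}$. Given such an ordering one embeds $H$ into $G$ greedily: after placing $v_1,\dots,v_{i-1}$, put $v_i$ into the common red neighbourhood of the (at most $\Delta$) images of its earlier neighbours; this step succeeds provided that common neighbourhood, once the at most $n$ already used vertices are removed, is non-empty. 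So everything reduces to locating in $G$ a \emph{uniformly dense} set: one in which any at most $\Delta$ vertices have a large common red neighbourhood.

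\medskip
Two routes lead to such a set. The regularity route applies Szemer\'edi's regularity lemma to the two-coloured $K_N$, producing clusters $V_1,\dots,V_t$ that are $\eps$-regular in both colours for a small \emph{absolute} $\eps$; colouring each edge of the reduced graph by the majority colour on the corresponding pair and invoking the finite Ramsey theorem once $t\ge r(\Delta+1;2)$, one obtains $\Delta+1$ clusters forming a monochromatic clique there with all pairwise densities (in that colour) bounded below. Cleaning the pairs to super-regularity, the Blow-up Lemma (or the key/embedding lemma of the regularity method) embeds $H$ -- recall $\chi(H)\le\Delta+1$ -- as long as each cluster has roughly $n$ vertices, i.e.\ $N\gtrsim tn$. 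This establishes $r(H)=O_\Delta(n)$, but is useless for the stated bound, because the regularity lemma forces $t$, hence $N$, to be of tower type in $\Delta$.

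\medskip
To reach $N=2^{c_2\Delta(\log\Delta)^2}n$ one builds the uniformly dense set by hand, and the exponent $\Delta(\log\Delta)^2$ -- as opposed to the ``ideal'' $\Delta$ -- has a transparent source: a graph of chromatic number at most $\Delta+1$ is the union of only $\lceil\log_2(\Delta+1)\rceil$ bipartite graphs (write each colour of a proper colouring of $H$ in binary and let the $b$-th layer consist of the edges whose endpoints differ in bit $b$). Thus one needs only $O(\log\Delta)$ rounds of refinement rather than $\Delta+1$: each round uses a dependent-random-choice argument to pass to a subset of the current vertex set in which every at most $\Delta$-element subset keeps a common red neighbourhood of size a fixed proportion of that set -- exactly what is needed to host one bipartite layer (and if such a set happens to induce a near-complete red graph, Lemma~\ref{GenEmbed} finishes immediately; in general one embeds greedily as above) -- and such a round shrinks the usable set by a factor $2^{O(\Delta\log\Delta)}$. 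Multiplying over the $O(\log\Delta)$ rounds gives the exponent $\Delta(\log\Delta)^2$. The main obstacle is the quantitative bookkeeping: controlling the precise shrinkage from each cleaning step and each partial embedding, arranging the layers so that the partial embeddings remain mutually consistent, and verifying that $N=2^{c_2\Delta(\log\Delta)^2}n$ survives it all. Plain linearity in $n$, by contrast, already follows from the tower-constant regularity argument and is comparatively routine; it is the decomposition into $O(\log\Delta)$ bipartite layers, each treated by dependent random choice, that is essential for the refined bound.
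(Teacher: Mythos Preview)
The paper does not prove Theorem~\ref{linram}; it is quoted, with attribution to~\cite{grr}, as a known input to the main argument. So there is no proof in the paper to compare your proposal against.

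As for your sketch itself: the first route (regularity plus Ramsey on the reduced graph, then the embedding lemma) is the classical Chv\'atal--R\"odl--Szemer\'edi--Trotter argument and is correctly described, including the caveat that it only yields a tower-type constant. Your second route, however, is not what Graham, R\"odl and Ruci\'nski do, and as written it has a real gap. Their proof does not decompose $H$ into $O(\log\Delta)$ bipartite layers and embed them one at a time; instead it runs an iterated density-increment argument that locates, in one colour, a large \emph{bi-dense} subgraph (every not-too-small subset still has edge density bounded away from~$0$), into which all of $H$ is then embedded greedily in one pass. The exponent $\Delta(\log\Delta)^2$ arises from the interaction between the density parameter required for the greedy embedding (roughly $\Delta^{-\Delta}$) and the number of increment steps. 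Your binary-layer decomposition is a genuine technique, closer in spirit to later improvements (Conlon--Fox--Sudakov obtain $2^{c\Delta\log\Delta}$ via dependent random choice), but the crucial difficulty you flag yourself---``arranging the layers so that the partial embeddings remain mutually consistent''---is exactly the point your outline does not address: the layers share the same vertex set, so one cannot simply embed one bipartite piece, shrink, and embed the next. Without a mechanism that fixes the images of all vertices once and simultaneously certifies every edge, that paragraph is a heuristic for why the exponent might be $\Delta(\log\Delta)^2$, not a proof.
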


Finally, we will need stability results on monochromatic cycles in graphs whose edges are multicoloured. (A multi-colouring of the edges of some graphs is an assigment of colours to the edges in which an edge can have more than one colour.) It will come as no surprise that there are two cases, according to the parity of the cycle. We start with the odd case:

\begin{theorem}[Kohayakawa, Simonovits, Skokan, \cite{cycle-stab}]\label{jmy-odd}
 For $\gamma\in (0,10^{-2})$, let $\bar m,\bar M$ be integers with $\bar m> \gamma^{-6}$ and $\bar M \geq (3/2+14\gamma)\bar m$. Suppose that $G$ is a graph on $\bar M$ vertices with $\delta(G)\geq \bar M-\gamma \bar m$. 
 
 Then every multi-colouring of the edges of~$G$ with $2$ colours contains either a monochromatic odd cycle $C_\ell$ for some $\ell \geq (1+\gamma)\bar m$ or 
 there is a vertex $x\in V(G)$, a partition of $V(G)\setminus\{v\}$ into two sets $V_1$ and $V_2$, and an $i\in\{1,2\}$  such that 
 \begin{enumerate}[(a)]
\item all edges of $G[V_1] \cup G[V_2]$ have only colour $i$, 
\item all edges of $G[V_1, V_2]$ have only colour $2-i$, and 
\item $|V_1|,|V_2| < (1+\gamma)\bar m$.\label{sizeV1V2}
 \end{enumerate}
 \end{theorem}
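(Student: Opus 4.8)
The plan is to argue by contradiction. Assume $G$ carries a multi-colouring of its edges with $2$ colours and has \emph{no} monochromatic odd cycle of length at least $(1+\gamma)\bar m$; we must then produce the vertex~$x$, the partition $V_1,V_2$ of $V(G)\setminus\{x\}$, and the index~$i$. Write $G_1,G_2$ for the spanning subgraphs of $G$ carrying colour $1$ and colour $2$ respectively, so that $E(G)=E(G_1)\cup E(G_2)$; since the colouring may be a genuine multi-colouring, $G_1$ and $G_2$ need not be edge-disjoint, but this only helps. We may also assume $\bar M$ is not much larger than $\tfrac32\bar m$ (say $\bar M\le 2(1+\gamma)\bar m$): for larger $\bar M$ the desired partition would be impossible, since then $|V_1|+|V_2|=\bar M-1>2\max\{|V_1|,|V_2|\}$, and instead one produces the long monochromatic odd cycle directly from the classical value $r(C_\ell)=2\ell-1$ for large odd $\ell$ (in a dense-host form, using $\delta(G)\ge\bar M-\gamma\bar m$). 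In this range $G$ differs from $K_{\bar M}$ by at most $\gamma\bar m$ missing edges at each vertex, which lets us treat $G$ as essentially complete and, at the end, absorb a bounded number of correction steps using $\bar m>\gamma^{-6}$.

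The engine of the argument is a statement about long odd cycles in a single dense graph, which I would isolate first: if a graph $F$ contains a $2$-connected, non-bipartite subgraph $F'$ with $|F'|\ge(1+\gamma)\bar m$ and $\delta(F')\ge\tfrac12(1+\gamma)\bar m$, then $F$ has an odd cycle of length at least $(1+\gamma)\bar m$. The proof is two steps. First, a classical circumference bound for $2$-connected graphs yields in $F'$ a cycle $C$ of length at least $\min\{|F'|,2\delta(F')\}\ge(1+\gamma)\bar m$; if $C$ is odd we are done. If $C$ is even, non-bipartiteness together with $2$-connectivity gives an ``odd ear'' --- a path between two vertices of $C$ which, combined with one of the two arcs of $C$ between those vertices, closes an odd cycle --- and rerouting along it produces an odd cycle that reuses almost all of $C$, losing at most the odd girth of $F'$ in length. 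Since $\delta(F')\ge\tfrac12(1+\gamma)\bar m$ lies above the Andr\'asfai--Erd\H{o}s--S\'os threshold relative to $|F'|$ once $\bar m$ is large, the odd girth of $F'$ is bounded by a constant depending only on $\gamma$, hence negligible against $\bar m>\gamma^{-6}$. Consequently \emph{neither} $G_1$ \emph{nor} $G_2$ may contain such a subgraph~$F'$.

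I would then use $\delta(G)\ge\bar M-\gamma\bar m$ to read off the near-extremal structure. At every vertex the colour-$1$ and colour-$2$ degrees sum to at least $\bar M-\gamma\bar m$, so the denser colour class, say $G_1$, is globally dense, and I claim it is ``almost a disjoint union of two cliques'': take a maximum cut $(V_1,V_2)$ of $G_1$ (on its relevant dense part); if many colour-$1$ edges were present inside $V_1\cup V_2$, then, by the density and connectivity forced by the min-degree hypothesis, one could locate inside $G_1$ a subgraph $F'$ as in the engine lemma, producing a long colour-$1$ odd cycle --- contradiction. Hence only $o(\bar m^2)$ colour-$1$ edges lie inside $V_1\cup V_2$, so, $G$ being near-complete, $G_2$ is near-complete inside each of $V_1$ and $V_2$; running the same dichotomy for $G_2$ now forces $G_2$ restricted to $V_1$, and to $V_2$, to be almost bipartite, and its density there pins $|V_1|$ and $|V_2|$ to within $o(\bar m)$ of a common value in the interval $[(\tfrac12+13\gamma)\bar m,(1+\gamma)\bar m)$ --- which is conclusion~(c).

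It remains to ``clean up'' the $o(\bar m^2)$ exceptional colour-$1$ edges inside $V_1\cup V_2$ and the exceptional colour-$2$ edges between $V_1$ and $V_2$, by moving a bounded number of vertices between $V_1$ and $V_2$ and exiling at most one vertex to the role of $x$; one then checks that no ``defect'' can survive --- a colour-$1$ edge inside a part, a colour-$2$ edge across the cut, or a second exceptional vertex --- because any such defect can be routed into a monochromatic odd cycle whose length lands in the window $[(1+\gamma)\bar m,\ \max\{|V_1|,|V_2|\}+O(1)]$ (a long cycle through a part-clique absorbing a colour-$1$ defect, or a long cycle in the complete bipartite colour-$2$ graph on $V_1,V_2$ closed up through a colour-$2$ defect or through~$x$). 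This gives (a) and (b). The step I expect to be the main obstacle is precisely this last one together with the extraction before it: one must carry out the cycle-routing with enough control on lengths and parity, and keep every accumulated error below the slack afforded by the $14\gamma$ in $\bar M\ge(\tfrac32+14\gamma)\bar m$ --- replacing the schematic ``$o(\cdot)$'' and ``$O(\cdot)$'' above by the honest constants is where the real work lies.
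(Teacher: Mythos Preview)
This theorem is not proved in the present paper: it is quoted from the manuscript \cite{cycle-stab} of Kohayakawa, Simonovits and Skokan and used as a black box in Section~\ref{sec:odd}. There is therefore no ``paper's own proof'' against which to compare your attempt.

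On its own merits, your outline follows the natural stability route (no long odd cycle in either colour $\Rightarrow$ one colour class is nearly bipartite $\Rightarrow$ the other is near-complete on each side, forcing the size bound $\Rightarrow$ clean-up), but two of the steps you treat as routine have real gaps. In your engine lemma, the claim that rerouting a longest even cycle $C$ through an odd ear costs only the odd girth is not correct as stated: an odd ear $P$ between $u,v\in V(C)$ produces an odd cycle of length $|P|$ plus the length of one arc of $C$, and in the worst case this is only about $|C|/2$, not $|C|-O(1)$. To get an odd cycle of length at least $(1+\gamma)\bar m$ one needs a genuinely stronger tool, for instance the theorem of Voss on long odd cycles in $2$-connected non-bipartite graphs of given minimum degree; the Andr\'asfai--Erd\H{o}s--S\'os appeal does not rescue this, since bounding the odd girth does not by itself control where on $C$ the ear attaches. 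In the structure step there is also a confusion: once $G_1$ is near-bipartite with parts $V_1,V_2$, the graph $G_2[V_j]$ is near-\emph{complete}, not ``almost bipartite''; the correct deduction at that point is simply $|V_j|<(1+\gamma)\bar m$ (else $G_2[V_j]$ already contains the forbidden long odd cycle), which is conclusion~(c). Finally, as you yourself concede, upgrading ``near'' to the exact conclusions (a) and (b) with a single exceptional vertex $x$ and no wrong-coloured edges whatsoever is where the substance lies, and nothing in your proposal indicates how to do it; routing a single defect edge into a monochromatic odd cycle of length landing in $[(1+\gamma)\bar m,\,|V_j|]$ through the part structure requires nontrivial length and parity control, not just constant-chasing.
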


A similar result holds for even cycles.

\begin{theorem}[Kohayakawa, Simonovits, Skokan, \cite{cycle-stab}]\label{jmy-even}
 \hskip-3pt For each $\beta\hskip-2.5pt\in\hskip-2.5pt(0,10^{-6})$, there is an $n_{\rm even}\in\mathbb N$ such that the following holds for all integers $m, M$ with $m> n_{\rm even}$ and $M \geq (3/2-\beta)m$.\\
If the edges of a graph $G$ on $M$ vertices with $\delta(G)\geq M-\beta m$ are multi-coloured with $2$-colours, then $G$ either contains a monochromatic even cycle $C_\ell$ for some $\ell \geq (1+\beta)m$ or there is a partition of  $V(G)$ into sets $V_1$, $V_2$ and $W$, and an $i\in\{1,2\}$ such that 
 \begin{enumerate}[(a)]
\item all but at most $\beta^{1/5}|V_1|^2$ edges of $G[V_1]$ have only colour $i$,\label{thmevena}
\item all but at most $\beta^{1/5}|V_1||V_2|$  edges of $G[V_1, V_2]$ have only  colour $3-i$, and\label{thmevenb}
\item $|V_1|\geq (1-\beta)m$, $|V_2|\geq (1-\beta)m/2$, $|W|<\beta^{1/5}m$.\label{thmevenc}
 \end{enumerate}
\end{theorem}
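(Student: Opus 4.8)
The plan is to prove Theorem~\ref{jmy-even} by the regularity method, in parallel with the odd case (Theorem~\ref{jmy-odd}), keeping close track of parities. First I would fix suitable constants $0<\eps\ll\eta\ll\beta^{1/5}$ (with $\eps$ not taken too small relative to $\beta$, so that the density losses below stay negligible), apply the $2$-coloured Szemer\'edi regularity lemma to $G$, remove clusters of small degree, and form the reduced graph $R$ on vertex set $[t]$: the edge $\{p,q\}$ gets colour $c\in\{1,2\}$ whenever the pair of clusters $(C_p,C_q)$ is $\eps$-regular and has colour-$c$ density at least $\sqrt\eps$. Since the colouring of $G$ is a multi-colouring, every sufficiently dense regular pair receives at least one colour; and since $\delta(G)\geq M-\beta m$, almost all pairs of clusters are dense and regular, so $R_1\cup R_2$ misses only $o(t^2)$ edges of $K_t$, while $\delta(R)\geq(1-\eta)t$. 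Writing $\ell$ for the cluster size, $t\ell\approx M\geq(3/2-\beta)m$.

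Next comes the embedding direction. If for some colour $c$ the reduced graph $R_c$ contains a cycle of clusters of length $s$ with $s\ell\geq(1+2\beta)m$, then a routine blow-up argument --- finding long paths through the regular colour-$c$ pairs along the cyclic sequence of clusters, and using the near-completeness of $R$ to arrange the parity --- produces a monochromatic even cycle in $G$ of length $\geq(1+\beta)m$, and we are done. So from now on both $R_1$ and $R_2$ have circumference less than $s_0:=\lceil(1+2\beta)m/\ell\rceil$, which, as $t\ell\approx M\geq(3/2-\beta)m$, is at most $(\tfrac23+3\beta)t$.

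The heart of the argument is then the following combinatorial statement: if a graph $R$ on $t$ vertices with $\delta(R)\geq(1-\eta)t$ is $2$-coloured so that neither colour class has circumference $\geq(\tfrac23+3\beta)t$, then, up to $\sqrt\eta\,t^2$ edges, one colour class $R_i$ is contained in the union of two cliques, on a set $U_1$ with $|U_1|\approx\tfrac23t$ and on $U_2:=[t]\setminus U_1$, and fills the clique on $U_1$, while the other colour class contains the complete bipartite graph between $U_1$ and $U_2$. To prove this I would first invoke the Erd\H{o}s--Gallai circumference bound (Theorem~\ref{thm:eg}), which gives $e(R_1),e(R_2)\leq(\tfrac13+O(\beta))t^2$; since $R_1\cup R_2$ covers almost all of $K_t$, both colours then carry a positive fraction of $K_t$. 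I would next run an Erd\H{o}s--Gallai/Kopylov-type stability analysis --- the per-colour edge counts being only just inside the stability range, so the minimum-degree hypothesis on $R$ has to be exploited to boost local density --- to locate inside one colour class, say colour $i$, a $2$-connected near-clique $U_1$. Theorem~\ref{thm:eg2} (a graph on $s$ vertices with minimum degree $>s/2$ has a Hamilton cycle) then forces $|U_1|\leq(\tfrac23+3\beta)t$; moreover any colour-$i$ edge between $U_1$ and $U_2$, or (beyond a near-bipartite amount) inside $U_1$, would recreate a long colour-$i$ cycle, which pushes essentially all $U_1$--$U_2$ edges into colour $3-i$; and as $K_{U_1,U_2}$ has circumference $2|U_2|$, its appearing inside colour $3-i$ forces $|U_2|\leq(\tfrac13+3\beta)t$. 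Hence $|U_1|\approx\tfrac23t$ and $|U_2|\approx\tfrac13t$.

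Finally I would lift this to $G$: take $V_1$ and $V_2$ to be the unions of the clusters indexed by $U_1$ and $U_2$, put into $W$ the exceptional vertices and any cluster not conforming to the pattern, and translate, using the regularity of the pairs and the density threshold $\sqrt\eps$, ``$R_i[U_1]$ is almost a clique'' and ``$R_{3-i}[U_1,U_2]$ is almost complete bipartite'' into the bounds $\beta^{1/5}|V_1|^2$ and $\beta^{1/5}|V_1||V_2|$ on the numbers of non-conforming edges of $G$; the size estimates $|V_1|\geq(1-\beta)m$, $|V_2|\geq(1-\beta)m/2$ and $|W|<\beta^{1/5}m$ follow from $|U_1|\approx\tfrac23t$, $|U_2|\approx\tfrac13t$ and $t\ell\approx M\approx\tfrac32m$ (one also checks here that $M$ cannot be much larger than $\tfrac32m$ in the structured case). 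I expect the genuine obstacle to be the combinatorial core of the third step: unlike for odd cycles, a large bipartite colour class still carries long \emph{even} cycles, so one cannot simply split off a non-bipartite core, and the near-extremal ``complete bipartite'' colour class must be identified as such rather than be a source of a long monochromatic cycle; one must therefore carry out the circumference-stability analysis with enough precision --- exploiting the minimum-degree hypothesis, since the per-colour edge counts alone are not quite enough --- and then collapse all of the accumulated slack into the single clean constant $\beta^{1/5}$ of the statement.
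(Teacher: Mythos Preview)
The paper does not contain a proof of Theorem~\ref{jmy-even}: it is quoted as a result of Kohayakawa, Simonovits and Skokan and attributed to the external manuscript~\cite{cycle-stab}. There is therefore no proof in this paper against which to compare your proposal.

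That said, your outline follows the standard route one would expect for such a stability result (regularity, reduced graph, Erd\H{o}s--Gallai/Kopylov-type stability for circumference, then lifting back), and is broadly plausible. The part you yourself flag as the ``genuine obstacle'' --- the circumference-stability analysis showing that a near-extremal $2$-coloured almost-complete graph with no long monochromatic even cycle must have the clique/bipartite structure --- is indeed the whole content of the theorem, and your sketch does not yet supply it: invoking ``an Erd\H{o}s--Gallai/Kopylov-type stability analysis'' is a placeholder rather than an argument, and the passage from ``both colour classes have about $\tfrac13 t^2$ edges'' to ``one colour contains a near-clique $U_1$ of size $\approx\tfrac23 t$'' needs a concrete mechanism. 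If you intend to write this up, that step is where essentially all the work lies; the regularity wrapping and the final lifting are routine.
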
  

In addition to Theorem~\ref{jmy-even}, we will use its following corollary:

\begin{corollary}\label{coro-even}
The sets $V_1$ and $V_2$ from Theorem~\ref{jmy-even} contain sets $A$ and $B$, respectively, such that
 \begin{enumerate}[(a')]
 \item the minimum degree in colour $i$ in $G[A]$ is at least $(1-\beta^{1/20})|A|$,  the maximum degree in colour $3-i$ in $G[A]$ is at most $\beta^{1/20}|A|$,
\item in colour $3-i$ in $G[A,B]$, every vertex of $A$ has degree at least $(1-\beta^{1/20})|B|$ and every vertex of $B$ has degree at least $(1-\beta^{1/20})|A|$, and
\item $|A|\geq (\frac 23-\beta^{1/20})\frac 32 m$, $|B|\geq (\frac 13-\beta^{1/20})\frac 32 m$.
  \end{enumerate}
\end{corollary}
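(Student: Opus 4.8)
The plan is to extract $A$ and $B$ from $V_1$ and $V_2$ by iteratively discarding the ``bad'' vertices — those that violate one of the degree conditions (a'), (b') — and then check that not too many vertices are removed, so that the size bounds (c') survive. Let me set $\alpha=\beta^{1/5}$ for brevity, so that Theorem~\ref{jmy-even} gives at most $\alpha|V_1|^2$ edges of $G[V_1]$ with colour $3-i$ present, at most $\alpha|V_1||V_2|$ edges of $G[V_1,V_2]$ with colour $i$ present, and (together with $\delta(G)\geq M-\beta m$ and the size bounds on $V_1,V_2,W$) good control on the underlying simple graph. The target exponent $\beta^{1/20}$ is $\alpha^{1/4}$, which is the usual gain one gets from a single ``defect form of Markov'' / counting argument: if there were more than $\alpha^{1/4}|V_1|$ vertices of $V_1$ each incident to more than $\alpha^{1/4}|V_1|$ edges of the wrong colour $3-i$ inside $G[V_1]$, that would already force more than $\alpha^{1/2}|V_1|^2>\alpha|V_1|^2$ such edges (for $\beta$ small), a contradiction. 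The same Markov-type estimate applied to the bipartite error bound handles the cross-pair.

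First I would clean up $V_1$: let $A_0\subseteq V_1$ be the set of vertices $v$ with at most $\alpha^{1/4}|V_1|$ incident colour-$(3-i)$ edges inside $V_1$. By the counting argument above, $|V_1\setminus A_0|\le \alpha^{1/4}|V_1|$. Next, within $G[A_0,V_2]$, discard from $A_0$ every vertex with more than $\alpha^{1/4}|V_2|$ missing colour-$(3-i)$ neighbours in $V_2$, and from $V_2$ every vertex with more than $\alpha^{1/4}|V_1|$ missing colour-$(3-i)$ neighbours in $V_1$; again Markov applied to the $\le\alpha|V_1||V_2|$ wrong-coloured cross-edges (plus the $\le\beta m$ non-edges at each vertex coming from $\delta(G)$, which is lower-order since $|V_1|,|V_2|=\Theta(m)$) shows only an $\alpha^{1/4}$-fraction is lost on each side. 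Call the survivors $A\subseteq A_0$ and $B\subseteq V_2$. After these removals one still has $|A|\ge(1-O(\alpha^{1/4}))|V_1|$ and $|B|\ge(1-O(\alpha^{1/4}))|V_2|$, and then every vertex of $A$ has at least $(1-\alpha^{1/4})|B|$ colour-$(3-i)$ neighbours in $B$ and at most $\alpha^{1/4}|A|$ colour-$(3-i)$ neighbours in $A$ — hence, using that almost all of $A$'s edges inside $A_0$ are present (from $\delta(G)$), at least $(1-O(\alpha^{1/4}))|A|$ colour-$i$ neighbours in $A$. Re-absorbing the constants into a slightly larger exponent — $\alpha^{1/4}=\beta^{1/5\cdot1/4}=\beta^{1/20}$ with room to spare for the $O(\cdot)$ constants, valid since $\beta<10^{-6}$ — gives (a') and (b'). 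Finally (c') follows from $|A|\ge(1-\beta^{1/20})|V_1|\ge(1-\beta^{1/20})(1-\beta)m\ge(\tfrac23-\beta^{1/20})\tfrac32 m$ after adjusting the exponent once more, and similarly for $B$ using $|V_2|\ge(1-\beta)m/2$.

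The step I expect to be fussiest rather than genuinely hard is bookkeeping the exponents: each cleaning pass loses a factor like $\alpha^{1/4}$ or $O(\alpha^{1/4})$, and one must verify that the accumulated loss, together with the $\beta m$ degree defect inherited from $G$, still fits under the single target bound $\beta^{1/20}$ — this works only because $\beta$ is taken so small that $\beta^{1/20}$ dominates any fixed constant times $\beta^{1/20+\epsilon}$. There is one real modelling point to be careful about: ``degree in colour $j$'' counts edges present in $G$ bearing colour $j$, so a vertex fails (a') either because an incident $G$-edge carries the wrong colour or because the $G$-edge is absent altogether; both contributions must be bounded, the first by the $\alpha|V_1|^2$ estimate and the second by $\delta(G)\ge M-\beta m$ combined with $|W|<\alpha m$ and $|V_1|+|V_2|+|W|=M\ge(3/2-\beta)m$, which together pin $|V_1|,|V_2|$ to within $O(\beta m)$ of $m$ and $m/2$ respectively. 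Once these two sources are separated, the rest is the routine iterated-deletion argument sketched above.
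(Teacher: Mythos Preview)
Your proposal is correct and follows essentially the same route as the paper: apply a Markov-type counting argument to the error bounds $\beta^{1/5}|V_1|^2$ and $\beta^{1/5}|V_1||V_2|$ to delete the small proportion of vertices with too many wrong-coloured incidences, then verify that the survivors satisfy (a')--(c'). The paper uses the intermediate threshold $\beta^{1/10}$ (square root of $\beta^{1/5}$) rather than your $\beta^{1/20}$ and is terser about the contribution of non-edges of $G$, but your explicit separation of the two defect sources (wrong colour versus missing edge) is a welcome clarification, not a different argument.
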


\begin{proof}
Observe that, firstly, condition
(a) implies that, in $V_1$, there are at most $2\beta^{1/10}|V_1|$ vertices that, in colour $3-i$, have degree at least $\beta^{1/10}|V_1|$ inside $V_1$. Secondly,
(b) implies that $V_1$ contains at most $\beta^{1/10}|V_1|$  vertices sending at least $\beta^{1/10}|V_2|$ edges   of colour $i$ to $V_2$. Finally, 
(b) implies that, in $V_2$, there are at most $\beta^{1/10}|V_2|$ vertices that send at least $\beta^{1/10}|V_1|$ edges of colour $i$ to $V_1$. 

Remove these in total at most $3\beta^{1/10}|V_1|$ vertices from $V_1$ and at most $\beta^{1/10}|V_2|$ vertices from $V_2$ to obtain sets $A$ and $B$ as required. In fact, by construction, each vertex of $A$ is incident with at most $\beta^{1/10}|V_1|\leq \beta^{1/20}|A| $ edges of colour $3-i$ that go to other vertices of $A$. Thus (a') holds. Similarly we can see that (b') holds.

For (c'), we have to see that  $|A|\geq (1-\frac 32\beta^{1/20}) m$ and $|B|\geq (1-3\beta^{1/20})\frac m2$. This is true as $|A|\geq (1-3\beta^{1/10})|V_1|$ and $|B|\geq (1-\beta^{1/10})|V_2|$ by construction. Using (c), we thus obtain (c').
\end{proof}

 \subsection{Regularity}
 
 In this section we introduce some very well-known concepts; the reader familiar with regularity is invited to skip this section.

We start with giving the standard definition of regularity.

\begin{definition}
Given $\epsilon>0$ and disjoint subsets $A, B$ of the vertex set of a graph $G$, we say that the pair $(A, B)$ is $\epsilon$\textit{-regular in} $G$  if, for every pair $(A', B')$ with $A'\subseteq A$, $|A'|\geq \epsilon |A|$, $B' \subseteq B$, $|B'|\geq \epsilon |B|$, we have
$$ 
 \left| d(A',B')-d(A,B)\right| <\epsilon.
$$
When there is no danger of confusion, we simply say that the pair $(A,B)$ is $\eps$-regular.
\end{definition}

We now turn to the regularity lemma.  
 
\begin{theorem}[Regularity Lemma; Szemer\'edi, \cite{szemeredi78}]\label{regu}
 For every $\epsilon>0$ and $m_{\rm reg}>0$, there are $M_{\rm reg}$ and $n_{\rm reg}$ such that, for each graph $G$ on at least $ n_{\rm reg}$ vertices, there exists a partition $V_0, V_1,\dots, V_t$ of $V(G)$ such that
\begin{itemize}
 \item [(i)] $m_{\rm reg}\leq t \leq M_{\rm reg}$;
 \item [(ii)]$|V_1|=|V_2|=...=|V_t|$ and $|V_0|\leq \epsilon |V(G)|$;   
 \item [(iii)] for each $i$, $1\leq i\leq t$, all but at most $\epsilon t$ of the pairs $(V_i, V_j)$, $1\leq j\leq t$, are  $\epsilon$-regular. 
\end{itemize}
We shall refer to a partition of $V(G)$ satisfying coditions (i)--(iii) as $\epsilon$-regular with respect to $G$.
\end{theorem}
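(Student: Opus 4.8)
The plan is to run the classical energy-increment (index) argument. To every partition $\mathcal{P}=\{W_1,\dots,W_s\}$ of $V(G)$ I would attach the \emph{index}
\[
 q(\mathcal{P})\ =\ \sum_{1\le i<j\le s}\frac{|W_i|\,|W_j|}{|G|^2}\,d(W_i,W_j)^2\,,
\]
and observe that $0\le q(\mathcal{P})\le 1$ for every $\mathcal{P}$. The proof rests on two facts. First, passing from $\mathcal{P}$ to any refinement never decreases the index; this is a Cauchy--Schwarz (convexity) computation, most cleanly phrased as: the mean square of the conditional expectation of the edge-indicator function can only grow when one conditions on a finer $\sigma$-algebra. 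Second, and this is the crux, if $\mathcal{P}$ has roughly equal part sizes but is \emph{not} $\epsilon$-regular --- i.e.\ more than $\epsilon s$ of the pairs $(W_i,W_j)$ fail the $\epsilon$-regularity condition --- then there is a refinement $\mathcal{P}'$, obtained by splitting each $W_i$ into at most a bounded number of pieces, with $q(\mathcal{P}')\ge q(\mathcal{P})+\epsilon^5$ (the precise constant is irrelevant).

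To prove the second fact I would, for each irregular pair $(W_i,W_j)$, fix witnessing subsets $A_{ij}\subseteq W_i$, $B_{ij}\subseteq W_j$ with $|A_{ij}|\ge\epsilon|W_i|$, $|B_{ij}|\ge\epsilon|W_j|$ and $|d(A_{ij},B_{ij})-d(W_i,W_j)|\ge\epsilon$, and then refine each $W_i$ by taking the common (Venn-diagram) refinement of the family of all sets $A_{ij}$ and $B_{ji}$ in which $W_i$ participates. A local estimate --- applying the defect form of Cauchy--Schwarz to the two-part partition of $W_i\times W_j$ induced by $A_{ij},B_{ij}$ --- shows that each irregular pair contributes an extra $\ge\epsilon^4\cdot\frac{|W_i|\,|W_j|}{|G|^2}$ to the index; summing over the more than $\epsilon s$ irregular pairs gives the claimed increment $\epsilon^5$. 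The number of pieces each $W_i$ is cut into is at most $2^{s-1}$, which is bounded because at every stage $s$ is bounded.

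With these in hand I would iterate: start from an (almost) equitable partition into exactly $m_{\rm reg}$ classes, putting any remainder into an exceptional set $V_0$. While the current partition is not $\epsilon$-regular, apply the refinement above, re-equitizing by moving at most $s$ vertices of each class into $V_0$ at each stage. Since $q$ lies in $[0,1]$ and increases by at least $\epsilon^5$ at each stage, there are at most $\lceil\epsilon^{-5}\rceil$ stages, so the process halts with an $\epsilon$-regular partition; the number $t$ of classes is then bounded by a (tower-type, but finite) function $M_{\rm reg}=M_{\rm reg}(\epsilon,m_{\rm reg})$, and never drops below $m_{\rm reg}$, giving (i). Because there are boundedly many stages and each multiplies $s$ by a bounded factor while moving only $O(s)$ vertices into $V_0$, one has $|V_0|\le\epsilon|G|$ once $n_{\rm reg}$ is large and $m_{\rm reg}$ is large compared with $1/\epsilon$; moving these few vertices spoils the regularity of at most a further $\epsilon t$ pairs per class, so (iii) survives and (ii) holds by construction. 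The main obstacle is the second fact of the first paragraph: both the single-pair energy-increment estimate and, more tediously, the bookkeeping needed to refine \emph{all} irregular pairs simultaneously while keeping the growth in the number of classes (and hence in $V_0$) under control. The convexity inequality, the termination count, and the final re-balancing are routine.
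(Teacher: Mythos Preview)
Your outline is the standard energy-increment proof of Szemer\'edi's Regularity Lemma and is correct in substance. However, the paper does not prove this theorem: it is stated as Theorem~\ref{regu} with a citation to Szemer\'edi~\cite{szemeredi78} and is used as a black box, so there is no proof in the paper to compare against. Your sketch is essentially the argument one finds in the survey of Koml\'os and Simonovits~\cite{KomSim}, which the paper also cites.

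One small remark on your write-up: you say the exceptional set stays small ``once $n_{\rm reg}$ is large and $m_{\rm reg}$ is large compared with $1/\epsilon$'', but $m_{\rm reg}$ is an input parameter, not something you may choose. The usual fix is to start the iteration from an equipartition into $\max\{m_{\rm reg},\lceil 1/\epsilon\rceil\}$ classes; this preserves the lower bound $t\ge m_{\rm reg}$ in (i) and makes the bookkeeping for $|V_0|\le\epsilon|G|$ go through. Also note that the formulation of (iii) in the paper is the ``per-class'' version (for each $i$, at most $\epsilon t$ bad pairs $(V_i,V_j)$), whereas your sketch phrases non-regularity globally (more than $\epsilon s$ irregular pairs in total); the two versions are interchangeable up to adjusting $\epsilon$ by a constant factor, but it is worth being explicit about which one your increment step actually delivers.
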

 
 We continue by stating two facts on $\eps$-regular pairs, which are very well-known, and can be found for example in~\cite{KomSim}.
 
\begin{fact}\label{fact1}
 Let $(V_1, V_2)$ be an $\epsilon$-regular pair of density  $d\geq 2\epsilon$ in some graph $G$. Then all but at most $\epsilon|V_1|$ vertices $v_1\in V_1$ are such that $\deg(v_1)\geq (d-\epsilon) |V_2|$. We shall refer to such a vertex $v_1$ as $\epsilon$-typical in $G$ with respect to $V_2$.
\end{fact}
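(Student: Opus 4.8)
The plan is a short proof by contradiction read directly off the definition of $\epsilon$-regularity; all the content is bookkeeping. Throughout, interpret $\deg(v_1)$ as the degree of $v_1$ into $V_2$, that is, $|N(v_1)\cap V_2|$. Let $A'\subseteq V_1$ be the set of vertices that fail the conclusion, i.e.\ those $v_1\in V_1$ with $\deg(v_1)<(d-\epsilon)|V_2|$. The goal is to show $|A'|\leq\epsilon|V_1|$, so I would assume for contradiction that $|A'|>\epsilon|V_1|$.

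First I would count the edges between $A'$ and $V_2$. Since $A'\subseteq V_1$ and $V_2$ are disjoint, summing the degree bound over $A'$ gives
$$e(A',V_2)=\sum_{v_1\in A'}\deg(v_1)<|A'|\,(d-\epsilon)\,|V_2|,$$
so that $d(A',V_2)=e(A',V_2)/(|A'|\,|V_2|)<d-\epsilon=d(V_1,V_2)-\epsilon$, and in particular $|d(A',V_2)-d(V_1,V_2)|>\epsilon$.

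On the other hand, $A'\subseteq V_1$ with $|A'|\geq\epsilon|V_1|$ (since $|A'|>\epsilon|V_1|$), and trivially $V_2\subseteq V_2$ with $|V_2|\geq\epsilon|V_2|$. Hence $(A',V_2)$ is one of the pairs tested in the definition of $\epsilon$-regularity of $(V_1,V_2)$, and that definition forces $|d(A',V_2)-d(V_1,V_2)|<\epsilon$. This contradicts the previous paragraph, so $|A'|\leq\epsilon|V_1|$, as claimed.

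There is no real obstacle here. The only points needing a moment's care are that $\deg(v_1)$ counts neighbours inside $V_2$ (not inside all of $G$), that a bad set $A'$ which is ``too large'' is automatically large enough ($|A'|\geq\epsilon|V_1|$) to be admissible in the regularity condition, and that the hypothesis $d\geq2\epsilon$ is used only to guarantee that the conclusion $\deg(v_1)\geq(d-\epsilon)|V_2|$ is a nontrivial, strictly positive lower bound; it is not needed in the argument itself.
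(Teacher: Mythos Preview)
Your argument is correct and is exactly the standard proof of this well-known fact. The paper does not actually prove it; it merely states it and refers to~\cite{KomSim}, so there is nothing to compare against.
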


\begin{fact}\label{fact2}
 Let $0<\epsilon\leq 1/4$  and let $(V_1, V_2)$ be an $\epsilon$-regular pair of density~$d$. Then, for every $V'_1\subseteq V_1$, $V'_2 \subseteq V_2$ such that $|V'_1|\geq \epsilon^{1/2} |V_1|$ and $|V'_2|\geq \epsilon^{1/2}  |V_2|$, the pair $(V'_1, V'_2)$ is $\epsilon^{1/2}$-regular with density at least $d-\epsilon$.
\end{fact}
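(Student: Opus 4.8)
The plan is to derive both assertions directly from the defining inequality of $\epsilon$-regularity for the original pair $(V_1,V_2)$, exploiting the identity $\epsilon^{1/2}\cdot\epsilon^{1/2}=\epsilon$ so that any subset that is ``large'' relative to $V_1'$ or $V_2'$ is automatically ``large'' relative to $V_1$ or $V_2$.

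First I would handle the density bound. Since $\epsilon\leq 1/4<1$ we have $\epsilon^{1/2}\geq\epsilon$, so the hypotheses $|V_1'|\geq\epsilon^{1/2}|V_1|$ and $|V_2'|\geq\epsilon^{1/2}|V_2|$ in particular give $|V_1'|\geq\epsilon|V_1|$ and $|V_2'|\geq\epsilon|V_2|$. Hence $(V_1',V_2')$ is an admissible test pair in the definition of $\epsilon$-regularity of $(V_1,V_2)$, which yields $|d(V_1',V_2')-d|<\epsilon$, and in particular $d(V_1',V_2')>d-\epsilon$, as required.

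Next I would verify $\epsilon^{1/2}$-regularity of $(V_1',V_2')$. Let $W_1\subseteq V_1'$ and $W_2\subseteq V_2'$ with $|W_1|\geq\epsilon^{1/2}|V_1'|$ and $|W_2|\geq\epsilon^{1/2}|V_2'|$. Chaining the size estimates gives $|W_1|\geq\epsilon^{1/2}|V_1'|\geq\epsilon^{1/2}\cdot\epsilon^{1/2}|V_1|=\epsilon|V_1|$, and likewise $|W_2|\geq\epsilon|V_2|$, so again by $\epsilon$-regularity of $(V_1,V_2)$ we obtain $|d(W_1,W_2)-d|<\epsilon$. Combining this with $|d(V_1',V_2')-d|<\epsilon$ from the previous step via the triangle inequality gives $|d(W_1,W_2)-d(V_1',V_2')|<2\epsilon$.

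Finally, the hypothesis $\epsilon\leq 1/4$ is exactly what closes the argument: it gives $2\epsilon^{1/2}\leq 1$, hence $2\epsilon\leq\epsilon^{1/2}$, so $|d(W_1,W_2)-d(V_1',V_2')|<\epsilon^{1/2}$ for all such $W_1,W_2$, i.e.\ $(V_1',V_2')$ is $\epsilon^{1/2}$-regular with density at least $d-\epsilon$. I do not expect any genuine obstacle here; the only points that need a moment's care are the bookkeeping with subset sizes (the factor $\epsilon^{1/2}$ is applied twice, which is where $\epsilon$ enters) and the elementary inequality $2\epsilon\leq\epsilon^{1/2}$, valid precisely because $\epsilon\leq 1/4$.
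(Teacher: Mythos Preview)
Your argument is correct and is exactly the standard verification of this fact. Note that the paper itself does not supply a proof of Fact~\ref{fact2}; it is stated without proof as a well-known property of regular pairs, with a reference to~\cite{KomSim}. Your write-up is the usual one found in such references, so there is nothing to compare.
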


 We now state the so-called embedding lemma. Given a graph $R$ and a~positive integer $s$, let $R_s$ be the graph obtained from $R$ by replacing each vertex $v$ of $R$ with an independent set $V_v$ of size $s$, and each edge $vw$ of $R$ with the complete bipartite graph on $V_v$ and $V_w$.
 
\begin{lemma}[Embedding lemma, \cite{KomSim}]\label{blow-up}
  For all $\Delta\in\mathbb N$ and $p>0$, if $\eps'>0$ is such that $(p-\eps')^\Delta-\Delta\eps'>p^\Delta/2$, then the following holds for every graph $H$ of maximum degree $\Delta$.

Let $R$ be a graph on vertices $1,\ldots ,r$, and let $G$ be a graph on the union of the sets $V_1,\dots,V_r$, where each $V_i$ has the same size $\lambda$, and for each edge $ij\in E(R)$ the pair $(V_i,V_j)$ is $\eps'$-regular and of density at least $p$. Suppose $s\leq p^\Delta \lambda/2$. If $H\subseteq R_s$, then also $H\subseteq G$.
\end{lemma}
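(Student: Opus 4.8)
The plan is a standard greedy, vertex-by-vertex embedding with shrinking candidate sets. Since in $R_s$ each blown-up class is an independent set, the projection $R_s\to R$ restricts to a map $\psi\colon V(H)\to V(R)$ under which every edge of $H$ goes to an edge of $R$; thus whenever $h,h'\in V(H)$ are adjacent, the clusters $V_{\psi(h)}$ and $V_{\psi(h')}$ are distinct, $\eps'$-regular, and of density at least $p$. Enumerate $V(H)=\{h_1,\dots,h_k\}$ in an arbitrary order and build an injection $x\colon V(H)\to V(G)$ with $x(h_a)\in V_{\psi(h_a)}$ and $x(h_a)x(h_b)\in E(G)$ whenever $h_ah_b\in E(H)$. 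Throughout the process I would maintain, for each $h_b$ not yet embedded, a \emph{candidate set} $C_b\subseteq V_{\psi(h_b)}$ consisting of the vertices of $V_{\psi(h_b)}$ adjacent in $G$ to $x(h_{b'})$ for every already-embedded neighbour $h_{b'}$ of $h_b$; initially $C_b=V_{\psi(h_b)}$. The invariant to keep is $|C_b|\ge(p-\eps')^{e_b}\lambda$, where $e_b\le\Delta$ is the number of currently embedded neighbours of $h_b$. Note that the hypothesis $(p-\eps')^\Delta>p^\Delta/2+\Delta\eps'$ forces $(p-\eps')^\Delta>\eps'$, so the invariant keeps every candidate set above the threshold $\eps'\lambda$ needed to invoke $\eps'$-regularity, and also above $\eps'$-fraction of its cluster.

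The one-step argument runs as follows. To embed $h_a$, I must choose $x(h_a)\in C_a$ that avoids (i) the at most $s-1$ vertices of $V_{\psi(h_a)}$ already used (at most $s$ vertices of $H$ map to each cluster because $H\subseteq R_s$), and (ii) for each not-yet-embedded neighbour $h_b$ of $h_a$, the set $B_b$ of vertices $v\in C_a$ with $|N_G(v)\cap C_b|<(p-\eps')|C_b|$, which are forbidden because choosing them would violate the invariant for $C_b$. The crucial estimate is $|B_b|<\eps'\lambda$: if instead $|B_b|\ge\eps'\lambda$, then since $|C_b|\ge\eps'\lambda$ as well, $\eps'$-regularity of $(V_{\psi(h_a)},V_{\psi(h_b)})$ gives $d(B_b,C_b)>d(V_{\psi(h_a)},V_{\psi(h_b)})-\eps'\ge p-\eps'$, whereas averaging over $B_b$ gives $d(B_b,C_b)<p-\eps'$, a contradiction. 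Since $h_a$ has at most $\Delta$ neighbours, the total number of forbidden vertices is less than $(s-1)+\Delta\eps'\lambda<p^\Delta\lambda/2+\Delta\eps'\lambda<(p-\eps')^\Delta\lambda\le|C_a|$, where the middle inequality is exactly the hypothesis on $\eps'$ combined with $s\le p^\Delta\lambda/2$. Hence a valid choice $x(h_a)$ exists. After fixing it, I replace $C_b$ by $C_b\cap N_G(x(h_a))$ for each not-yet-embedded neighbour $h_b$ of $h_a$; by the definition of $B_b$ the new set has size at least $(p-\eps')|C_b|\ge(p-\eps')^{e_b+1}\lambda$, so the invariant is restored.

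Iterating this for $a=1,\dots,k$ yields the required embedding of $H$ into $G$. I do not expect a genuine obstacle here: the entire content is the regularity estimate $|B_b|<\eps'\lambda$ above, and the only thing needing care is the bookkeeping — verifying that the invariant plus the hypothesis on $\eps'$ keep all candidate sets above $\eps'\lambda$ so that $\eps'$-regularity is always applicable, and that $H\subseteq R_s$ bounds by $s$ the number of previously used vertices in any cluster. Everything else is the routine greedy argument.
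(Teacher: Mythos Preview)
Your argument is correct and is precisely the standard greedy embedding proof of the Embedding Lemma. Note, however, that the paper does not actually prove this lemma: it is quoted from~\cite{KomSim} and only the remark~\eqref{blw-up:1} on sufficient conditions is added. So there is nothing in the paper to compare against beyond the statement itself; your write-up supplies the omitted (well-known) proof, and the bookkeeping you flag---that the invariant keeps every $C_b$ above $\eps'\lambda$ so regularity applies, and that $H\subseteq R_s$ caps the previously used vertices per cluster by~$s$---is exactly what makes the greedy step go through.
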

 In particular, we note that the conditions of Lemma \ref{blow-up} are satisfied if
\begin{equation}\label{blw-up:1}
 \eps'\leq \frac{p^\Delta}{4\Delta} \quad \mbox{ and } \quad s\leq 2\Delta\eps' \lambda.
\end{equation} 
  
  \subsection{Embedding long paths}
  
In this section we prove an important lemma, Lemma~\ref{lem:path}, which shall be used later in the proof of our main theorems. This lemma is about embedding a~family of long paths into a sequence of $\eps$-regular pairs. As a~starting point, we use a lemma from~\cite{Ben2010} that shows under which condition we can embed a~path into one regular pair.

\begin{lemma}[Benevides, \cite{Ben2010}]\label{benevides}
  For every $\delta'\in (0,1)$ and for every $\epsilon'\in (0,\frac{\delta'}{20})$, there is an $n_{\ref{benevides}}=n_{\ref{benevides}}(\delta', \eps')$ with the following property: Let $n\geq n_{\ref{benevides}}$ and let $(X_1, X_2)$ be an $\epsilon'$-regular pair (in some graph) with density at least $\delta'$ and with $|X_1|=|X_2|=n$. Then, for every $\ell$, $1\leq\ell\leq n-2\epsilon' n/\delta$, and for every two vertices $v_1\in X_1, v_2\in X_2$ such that $\deg(v_1), \deg(v_2)\geq \delta' n/2$, there exists a $v_1$--$v_2$-path of length $2\ell+1$.
\end{lemma}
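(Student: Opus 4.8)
The statement to prove is Lemma~\ref{benevides} of Benevides: in a single $\eps'$-regular pair $(X_1,X_2)$ of density at least $\delta'$, with $|X_1|=|X_2|=n$ large, any two vertices $v_1\in X_1$, $v_2\in X_2$ of degree at least $\delta' n/2$ can be joined by a path of any prescribed odd length $2\ell+1$ for $1\le \ell\le n-2\eps' n/\delta'$. The plan is to build the path greedily, alternating between $X_1$ and $X_2$, maintaining at each step a large ``reservoir'' of still-usable vertices on each side so that the $\eps'$-regularity can always be invoked to find the next vertex. The key structural fact (essentially Fact~\ref{fact1} applied to sub-pairs via Fact~\ref{fact2}, or proved directly from $\eps'$-regularity) is: if $A\subseteq X_1$, $B\subseteq X_2$ with $|A|,|B|\ge \eps' n$, then all but at most $\eps' n$ vertices of $A$ have at least $(\delta'-\eps')|B|$ neighbours in $B$, and symmetrically; call such vertices typical. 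Typical vertices are abundant, and from a typical vertex in $A$ one can always step to $B$ and land again in a large set.

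\textbf{Main construction.} First handle the two endpoints: since $\deg(v_1),\deg(v_2)\ge \delta' n/2 \gg \eps' n$, the neighbourhoods $N(v_1)\cap X_2$ and $N(v_2)\cap X_1$ are large enough to serve as ``entry sets'' into the regular pair. The core of the argument is an iterative extension: suppose we have already built a path $v_1 = x_0, x_1, \dots, x_{j}$ with $x_j$ in, say, $X_2$, having used few vertices so far (at most $2\ell+2 \le 2n$ total, but crucially leaving at least, say, $n/2$ unused vertices on each side — this is where the bound $\ell \le n - 2\eps' n/\delta'$ enters, ensuring the path is short enough that reservoirs never shrink below the $\eps' n$ threshold needed to apply regularity). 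We want to extend toward $v_2$. Let $U_1 = X_1 \setminus (\text{used vertices})$ and similarly $U_2$; both are large. By the typicality fact, $x_j$ (if chosen typical with respect to $U_1$ at the time it was added) has many neighbours in $U_1$; pick among them a vertex that is itself typical with respect to $U_2 \setminus \{x_{j+1}\}$, and continue. Repeating this, one constructs a path of essentially arbitrary length up to roughly $n$; to hit the exact target length $2\ell+1$ and to land exactly at $v_2$, one reserves a short ``connecting'' segment at the end: stop the greedy growth a few steps early, at a vertex $y$ in a large typical set $S\subseteq X_1$, and then use regularity on the pair $(S, N(v_2)\cap X_2)$ — both sets still of size $\ge \eps' n$ — to find an $y$--$v_2$ path of the correct remaining (odd) length. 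A clean way to organise the exact-length matching is to first build the path so that it is a few vertices longer or shorter than needed is impossible in a bipartite pair since parity is forced, so one grows in steps of exactly one edge and simply stops when the length is $2\ell$, then closes with one more edge to $v_2$ using $N(v_2)$; the only real content is that the reservoirs stay above $\eps' n$, which follows from $\ell \le n - 2\eps' n/\delta'$.

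\textbf{Bookkeeping for reservoir sizes.} At every step the number of used vertices on either side is at most $\ell+1$. Since $\ell \le n - 2\eps' n /\delta'$, the unused set on each side always has size at least $2\eps' n/\delta' - 1 \ge \eps' n$ (for $n$ large and $\delta'\le 1$), so the typicality fact applies at every step, guaranteeing that the set of valid choices for the next vertex is nonempty — in fact of size at least $(\delta'-\eps')(2\eps' n/\delta') - (\text{few exceptional vertices}) > 0$, using $\eps' < \delta'/20$. One should carry a careful invariant, e.g. ``the current last vertex is typical with respect to the current reservoir on the opposite side,'' and verify it is preserved; this is routine but must be stated. The degree hypothesis $\deg(v_i) \ge \delta' n/2$ is used only at the two ends, exactly to guarantee the entry sets $N(v_i)$ have size far exceeding $\eps' n$.

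\textbf{Main obstacle.} The only genuinely delicate point is hitting the \emph{exact} length $2\ell+1$ while also terminating precisely at the prescribed vertex $v_2$ — a naive greedy walk controls length but may wander away from $v_2$, while aiming straight at $v_2$ does not obviously control length. The resolution (reserving a short final segment and invoking regularity on the pair formed by a still-large typical subset and $N(v_2)$, where one has enough freedom to realise any odd length in a bounded range) is standard but needs to be spelled out: one must check that after the greedy phase a typical set of size $\ge \eps' n$ remains on the $X_1$ side disjoint from the already-built path and from $v_2$, and that $|N(v_2)\cap X_2|\ge \eps' n$ minus the used vertices is still $\ge \eps' n$. All the quantitative slack for this is exactly what the hypothesis $\ell \le n - 2\eps' n/\delta'$ provides; choosing $n_{\ref{benevides}}(\delta',\eps')$ large enough to absorb the additive $O(1)$ losses completes the argument.
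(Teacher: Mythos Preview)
The paper does not prove this lemma; it is quoted from Benevides~\cite{Ben2010} and used as a black box. So there is no in-paper proof to compare against, and your sketch should be judged on its own.

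The greedy-path-with-reservoirs approach is indeed the standard one and is correct in outline. Your reservoir bookkeeping for the \emph{middle} of the path is fine: with $\ell\le n-2\eps' n/\delta'$ the unused sets stay above $2\eps' n/\delta'>\eps' n$, and the count $(\delta'-\eps')(2\eps' n/\delta')-\eps' n>0$ (using $\eps'<\delta'/20$) is right.

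There is, however, a genuine gap in your termination step. First a typo: $v_2\in X_2$, so $N(v_2)\subseteq X_1$; the set you want is $N(v_2)\cap X_1$, not $N(v_2)\cap X_2$. More importantly, your claim that ``$|N(v_2)|\ge \eps' n$ minus the used vertices is still $\ge \eps' n$'' is false in general. When $\ell$ is close to $n-2\eps' n/\delta'$, you have used roughly $\ell\approx n$ vertices of $X_1$, whereas $|N(v_2)|$ is only guaranteed to be $\ge \delta' n/2$; there is no reason the unused part of $X_1$ should intersect $N(v_2)$ at all. The hypothesis $\ell\le n-2\eps' n/\delta'$ bounds the size of the \emph{total} unused set, not of $N(v_2)\setminus(\text{used})$.

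The fix is to \emph{reserve} a subset $R\subseteq N(v_2)\cap X_1$ of size, say, $\eps' n$ at the very start and forbid the greedy walk from entering $R$. Since $|R|=\eps' n$, the effective $X_1$-reservoir during the greedy phase is still at least $2\eps' n/\delta'-\eps' n=\eps' n(2/\delta'-1)>\eps' n$, so your step-by-step count goes through unchanged. At step $2\ell-1$ you additionally require $x_{2\ell-1}\in X_2$ to be typical with respect to $R$ (only $\eps' n$ vertices fail this), and then choose $x_{2\ell}\in R\cap N(x_{2\ell-1})$ and finish with the edge $x_{2\ell}v_2$. With this reservation made explicit, your argument is complete; without it, the closing step can fail.
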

 
We now show how to embed more paths into a longer sequence of regular pairs.
 
 \begin{lemma}\label{lem:path}
Let $G$ be a graph, let $\eps\in (0, 1/625)$, and  let $P_1,\dots,P_k$ be a collection of pairwise internally vertex disjoint paths of odd lengths. Suppose that $\ell\in{\mathbb N}$ is odd and $V_1,\ldots, V_\ell\subseteq V(G)$ are such that
  \begin{enumerate}[(i)]
  \item $ |V_1|=|V_2|=\ldots =|V_\ell|\geq n_{\ref{benevides}}(\eps^{1/4}, \eps^{1/2})/4\eps^{1/2}$;\label{lem:pathe}
  \item $V_1,\ldots, V_\ell$ are pairwise disjoint, except (possibly) for $V_1$ and $V_\ell$;\label{lem:pathh} 
  \item for all $i\in[\ell-1]$, the pair $(V_i,V_{i+1})$ is $\eps$-regular of density at least~$2\eps^{1/4}$;\label{lem:pathf}
  \item $k\leq \eps^{1/2}|V_1|$;\label{lem:pathg}
  \item  for all $i\in [k]$,  $p^i:=|E(P_i)| \geq 3\ell$;\label{lem:patha}
  \item $\displaystyle\sum_{i=1}^kp^i\leq  \big(1-2\epsilon^{1/4}\big)(\ell-2)|V_1|$; \label{lem:pathb}
  \item for all $i\in [k]$, the first vertex of $P_i$ is embedded in a vertex $a_i\in V_1$ and the last vertex of $P_i$ is embedded in a vertex $b_i\in V_\ell$; and\label{lem:pathc}
  \item for all $i\in [k]$,  $a_i$ is $\eps$-typical in $G$ with respect to $V_2$ and $b_i$ is $\eps$-typical in $G$ with respect to $V_{\ell -1}$.\label{lem:pathd}
   \end{enumerate}
Then we can extend $A:=\bigcup_{i=1}^k\{a_i,b_i\}$ to an embedding of all of $\bigcup_{i=1}^k P_i$ in~$G$ such that the only vertices from $V_1\cup V_\ell$ used are those from $A$.
 \end{lemma}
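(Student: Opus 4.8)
The plan is to embed the paths $P_1,\dots,P_k$ one at a time, each path $P_i$ traversing the full chain of regular pairs $(V_1,V_2),(V_2,V_3),\dots,(V_{\ell-1},V_\ell)$ from its prescribed start $a_i\in V_1$ to its prescribed end $b_i\in V_\ell$. The key tool is Lemma~\ref{benevides}: applied to a single $\eps^{1/2}$-regular pair it lets us route a path of any prescribed odd length (up to essentially the size of the pair) between two vertices of large enough degree. Since $\ell$ is odd, the chain $V_1,\dots,V_\ell$ has an even number of pairs, and I would group them into $(\ell-1)/2$ consecutive ``double pairs'' $(V_1,V_3)$ via $V_2$, $(V_3,V_5)$ via $V_4$, etc.; within each consecutive triple $V_{2j-1},V_{2j},V_{2j+1}$ the pair $(V_{2j-1},V_{2j})$ and $(V_{2j},V_{2j+1})$ are $\eps$-regular of density $\ge 2\eps^{1/4}$, so by Fact~\ref{fact2} the relevant induced subpairs remain $\eps^{1/2}$-regular of density $\ge 2\eps^{1/4}-\eps \ge \eps^{1/4}$, which is exactly the setup Lemma~\ref{benevides} needs with $\delta'=\eps^{1/4}$, $\eps'=\eps^{1/2}$. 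For each $P_i$ I would choose how many vertices of $P_i$ to route through each block so that the total over all blocks is $p^i$; condition~\eqref{lem:patha} ($p^i\ge 3\ell$) guarantees each block gets at least a constant number of vertices, and I will split $p^i$ roughly evenly so that each block receives $\Theta(p^i/\ell)$ vertices.

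The heart of the argument is bookkeeping the already-used vertices. After embedding $P_1,\dots,P_{i-1}$, each set $V_j$ (for $1<j<\ell$) has had some vertices used; I must show that the number used is small enough that the remaining vertices still host a regular pair of not-much-smaller density. By~\eqref{lem:pathb} the total number of internal vertices of all the $P_i$ is at most $(1-2\eps^{1/4})(\ell-2)|V_1|$, and these are distributed over the $\ell-2$ ``interior'' sets $V_2,\dots,V_{\ell-1}$; the even-indexed sets $V_2,V_4,\dots$ are the ones through which Lemma~\ref{benevides} routes vertices, the odd-indexed interior sets $V_3,\dots,V_{\ell-2}$ serve as the endpoints of consecutive blocks. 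Either way, when embedding $P_i$, I first discard from each $V_j$ the (at most $(1-2\eps^{1/4})|V_1|$ in total, hence on average) vertices already used; since the paths are split roughly evenly one can arrange that no single $V_j$ ever loses more than, say, a $(1-\eps^{1/4})$-fraction, so a subset of size $\ge \eps^{1/4}|V_1| \ge \eps^{1/2}|V_1|$ (in fact much more) survives, keeping us safely above the $\eps^{1/2}$-fractional threshold required to apply Fact~\ref{fact2}. Condition~\eqref{lem:pathg} ($k\le\eps^{1/2}|V_1|$) ensures that the endpoints $a_i,b_i$ on the boundary sets $V_1,V_\ell$, and the ``transit'' endpoints chosen in the interior odd sets, can all be picked distinct from previously used ones and $\eps$-typical, using Fact~\ref{fact1}; condition~\eqref{lem:pathd} handles typicality of $a_i,b_i$ themselves, and for the internal transit vertices I pick them among the $\eps^{1/2}$-typical vertices of the relevant surviving subpair, which exist in abundance.

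More precisely, to embed $P_i$ I would: (1) pick odd lengths $q^i_1,\dots,q^i_{(\ell-1)/2}$ summing to $p^i$ with each $q^i_t$ between $3$ and, say, $2\lceil p^i/(\ell-1)\rceil+1$; (2) pick transit vertices $x^i_1\in V_3, x^i_2\in V_5,\dots$ (with $x^i_0:=a_i$, $x^i_{(\ell-1)/2}:=b_i$) avoiding all used vertices, each $\eps^{1/2}$-typical with respect to both neighbouring even sets in the surviving subpairs; (3) for each block $t$, apply Lemma~\ref{benevides} inside $(V_{2t-1}\setminus U, V_{2t+1}\setminus U)$ routed through $V_{2t}\setminus U$ (where $U$ is the set of previously used vertices, shrunk to equal sizes) to obtain an $x^i_{t-1}$--$x^i_t$ path of the chosen odd length $q^i_t$ using only fresh vertices; (4) concatenate these $(\ell-1)/2$ subpaths into the image of $P_i$. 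The main obstacle I anticipate is the simultaneous bookkeeping: verifying that after peeling off used vertices the pairs are still large enough and dense enough for \emph{every} $P_i$, including the last, and that the length constraint $q^i_t\le |V_{2t-1}\setminus U| - 2\eps'|V_{2t-1}\setminus U|/\delta'$ of Lemma~\ref{benevides} holds throughout — this is where~\eqref{lem:pathb} must be used carefully, making sure the ``spare'' $2\eps^{1/4}$-fraction in each interior set is never exhausted. Once the accounting is set up cleanly (splitting each $p^i$ proportionally so the load on each $V_j$ is balanced), the rest is a routine iterated application of Lemma~\ref{benevides} together with Facts~\ref{fact1} and~\ref{fact2}.
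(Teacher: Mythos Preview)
Your overall strategy---embed the $P_i$ one by one along the chain, choose typical transit vertices, and use Lemma~\ref{benevides} inside regular pairs for the long stretches, with condition~\eqref{lem:pathb} supplying the room---is exactly the paper's. But your particular blocking into triples $V_{2t-1},V_{2t},V_{2t+1}$ does not work as written, for two linked reasons.

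First, a parity obstruction: any path from $x^i_{t-1}\in V_{2t-1}$ to $x^i_t\in V_{2t+1}$ that only uses edges of the two regular pairs $(V_{2t-1},V_{2t})$ and $(V_{2t},V_{2t+1})$ has \emph{even} length (each edge changes the cluster index by $\pm 1$, and the net change is $+2$). So you cannot make the block lengths $q^i_t$ odd; and if you make them even, their sum over the $(\ell-1)/2$ blocks is even, contradicting that $p^i$ is odd. Second, your step~(3) misapplies Lemma~\ref{benevides}: that lemma takes a \emph{single} $\eps'$-regular pair $(X_1,X_2)$ and produces a $v_1$--$v_2$ path of prescribed odd length inside it. There is no version for ``$(V_{2t-1},V_{2t+1})$ routed through $V_{2t}$''; the pair $(V_{2t-1},V_{2t+1})$ is not even assumed regular. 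A related slip is the claim that only even-indexed clusters are used heavily---in any scheme based on Lemma~\ref{benevides}, each long stretch sits in one genuine pair $(V_j,V_{j+1})$ and so consumes both an even- and an odd-indexed cluster.

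The paper resolves all of this by placing a transit vertex $w^i_j$ in \emph{every} $V_j$ and assigning an odd length $q^i_j$ to each of the $\ell-1$ consecutive pairs: $q^i_j=1$ for odd $j$ (so $w^i_jw^i_{j+1}$ is a single edge, and in particular only $a_i,b_i$ are used in $V_1,V_\ell$), and $q^i_j\ge 3$ odd for even $j$, realised by one direct application of Lemma~\ref{benevides} to the actual regular pair $(W^i_j,W^i_{j+1})\subseteq(V_j,V_{j+1})$. The bookkeeping then reduces to the single per-cluster inequality $\sum_{i}(q^i_j+1)\le 2(1-2\eps^{1/4})|V_j|$ for each even $j$, which follows from~\eqref{lem:patha} and~\eqref{lem:pathb}; no ``balanced split'' of each individual $p^i$ is required.
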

Notice that the sets $V_1$ and $V_\ell$ can have a non-empty intersection. Consequently, in such a case, it is possible to choose $a_i=b_i$ and obtain a cycle of length $p^i$ instead of path of length $p^i$. However, in this paper, we will always make sure that $a_i\neq b_i$.
\begin{proof}
For every $j\in[\ell-1]$ and for all $i\in[k]$, we choose an odd number~$q^i_{j}$ that will indicate how many edges of $P_i$ we plan to embed into the pair $(V_j,V_{j+1})$. We choose the numbers $q^i_{j}$ so that they satisfy the following three conditions:
 \begin{enumerate}[(a)]
  \item $q^i_{j}=1$ if $j$ is odd, and $q^i_{j}\geq 3$ if $j$ is even; \label{even1}
  \item $\displaystyle\sum_{j=1}^{\ell-1} q^i_{j}=p^i$ for each $i\in[k]$; and \label{even2}
  \item $\displaystyle \sum_{i=1}^k (q^i_{j}+1)\leq 2 \big(1-2\epsilon^{1/4}\big)|V_j|$ for each even $j\in[\ell -1]$.\label{fitsinthepair}
 \end{enumerate}
  
 Observe that such a choice is possible because of  asumptions~\eqref{lem:patha} and~\eqref{lem:pathb}.   
 \medskip
 
 \begin{figure}[ht]
  \centering
{\label{fig:pathlemma}
\def\svgwidth{320pt}
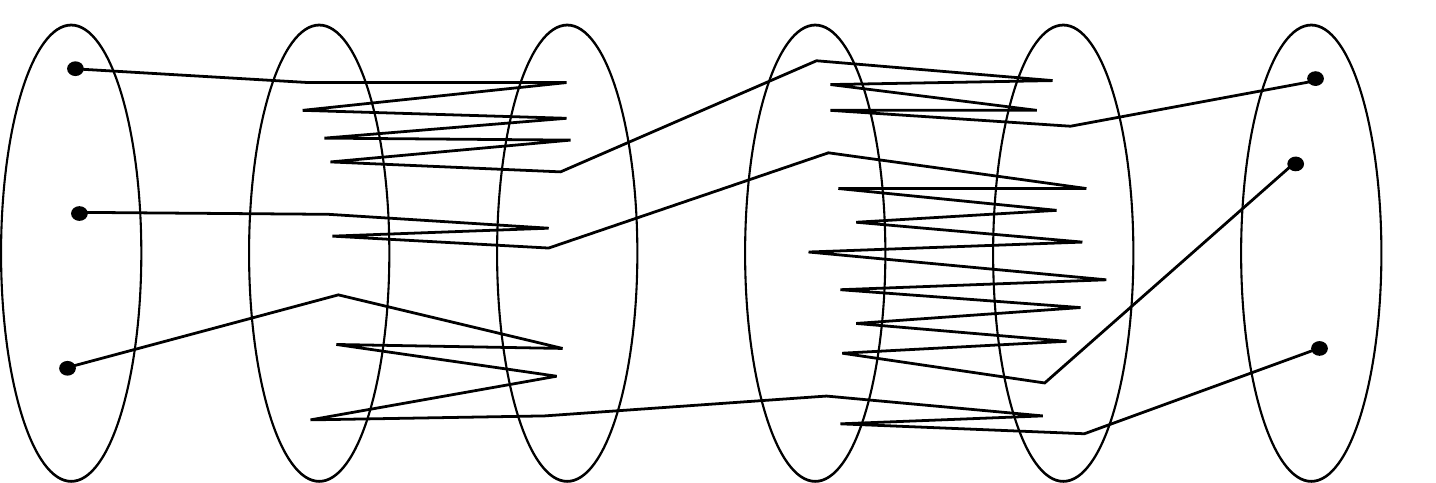}                
  \caption{The numbers $q_j^i$ stand for the number of edges of $P_i$ we wish to embed in the pair $(V_j,V_{j+1})$.}
  
\end{figure}
 \medskip

 Now, for $j\in[\ell]$ and $i\in[k]$, set 
 $$
  Q^i_j:=\sum_{h=1}^{j-1} q^i_{h}+1
 $$ 
 and let $v^i_j$ denote the $Q^i_j$-th vertex on $P_i$. In particular, the preimage of $w^i_1:=a_i$ is  $v^i_1$ and  the preimage of $w^i_{\ell}:=b_i$ is  $v^i_{\ell}$. 
  
 Next, successively, for each $i\in[k]$, we embed $P_i$ in the following way: For each $j= 2,\ldots \ell -1$,  we will consider certain sets $W^i_j\subset V_j$. The idea is that, for even $j$, the sets $W^i_j$ and $W^i_{j+1}$ contain possible images for $V(v^i_{j}P_iv^i_{j+1})\cap V_j$. The sets $W^i_j$ will satisfy the following conditions. 
 \begin{enumerate}[(a)]
\addtocounter{enumi}{3}
  \item $W_2^i\subset N(w^i_1)\cap V_2$, $W_{\ell-1}^i\subset N(w^i_\ell)\cap V_{\ell -1}$, and $W_j^i\subset  V_j$ for each $j=3,\dots,\ell-2$;
  \item $W^i_{j}$ contains no vertex of $P_1,\dots, P_{i-1}$, for  any $j=2,\dots, \ell -1$; 
  \item $\big(1-2\epsilon^{1/4}\big)|W^i_{j}|>(q^i_{2\lfloor \frac j2\rfloor}+1)/2$ for each $j=2,\dots, \ell -1$; 
  \item $|W^i_{j}|>4\epsilon^{1/2}|V_j|$ for $j=2,\dots, \ell -1$; and
  \item $|W^i_j| = |W^i_{j+1}|$ for each even $j=2,4,\dots,\ell-2$.\label{gleichgross}
 \end{enumerate}

\smallskip

Observe that (g), together with Fact \ref{fact2}, ensures that, for $j=2,\dots, \ell -2$,
\begin{equation}\label{Wijarenice}
\text{  $(W^i_j,W^i_{j+1})$ is $\eps^{1/2}$-regular with density at least $2\eps^{1/4}-\eps > \eps^{1/4}$.}
\end{equation}

First, assuming the existence of the sets $W^i_j$, let us embed all the $P_i$. We will show later that the sets $W^i_j\subset V_j$ satisfying (d)--(h) do exist. We shall start with the vertices $v^i_j$, which will be placed in the following way.

For $j=2,\ldots \ell-1$, the vertex $v^i_{j}$ is embedded in a vertex $w^i_{j}$ of $W^i_{j}$ such that 
\begin{subequations}
\begin{align}
&\text{ $w^i_{j}$ is $\epsilon^{1/2}$-typical in $G$ with respect to $W^i_{j-1}$ and $W^i_{j+1}$,}\label{Wija}\\ 
\intertext{and}
&\text{$w^i_jw^i_{j+1}$ is an edge of $G^r$ if $q^i_j=1$.}\label{Wijb}
\end{align}
\end{subequations}

By Fact \ref{fact1}, all but at most $2\epsilon^{1/2} |W^i_{j}|$ vertices in $W^i_{j}$ satisfy \eqref{Wija}.  In order to see \eqref{Wijb}, observe that the sets of vertices satisfying \eqref{Wija} for $j$ and for $j+1$ are large enough to apply $\epsilon^{1/2}$-regularity of $(W^i_j,W^i_{j+1})$, hence there must be an edge $w^i_jw^i_{j+1}$  between them. (Note that condition \eqref{Wijb} holds also for $j=1,\ell-1,$ because of (d).)

The last step of our embedding is the embedding of the subpaths $v^i_{j}P_i v^i_{j+1}$ for each even $j\in[\ell]$.   By~\eqref{even1} we know that $v^i_{j}P_i v^i_{j+1}$ has odd length $q^i_j\geq 3$. We wish to embed the edges of $v^i_{j}P_i v^i_{j+1}$ into the pair $(W^i_j,W^i_{j+1})$. For this,  we plan to use Lemma~\ref{benevides} with $\delta'=\eps^{1/4}$ and $\eps'=\eps^{1/2}$. The assumptions of Lemma~\ref{benevides} are verified in~\eqref{Wijarenice}, (f), (h) and \eqref{Wija}, and the only condition being left to check is that $|W_j^i|\geq n_{\ref{benevides}}$. But this follows from
\[
 |W_j^i|\overset{(g)}\geq 4\epsilon^{1/2}|V_j|
            \overset{\eqref{lem:pathe}}
            \geq n_{\ref{benevides}}(\eps^{1/4}, \eps^{1/2})
            = n_{\ref{benevides}}(\delta', \eps').
\]

This completes the embedding of the paths $P_i$. It only remains to show that the sets $W^i_j\subset V_j$ satisfying (d)--(h) do exist. Let $i\leq k$ be given and assume we have already embedded all paths $P_h$ with $h<i$. Let $U$ be the set of all vertices used by our embedding of $\displaystyle\bigcup_{h<i} P_h$.

Now, for $j=2, \ell-1$, choose $W_j^i\subseteq  N(w^i_j)\cap V_j-U$  and, for  $j=3,\dots,\ell-2$, $W_j^i\subseteq V_j-U$ as large as possible, but such that $|W^i_j| = |W^i_{j+1}|$ for each even $j=2,4,\dots,\ell-2$. This choice clearly guarantees conditions (d), (e), and~(h).  
 
To show (g), observe that,  for $j=2, \ell-1$, we have
\begin{align*}
 |N(w^i_j)\cap V_j-U| &\overset{\eqref{lem:pathd}}\geq 
    \lfloor (2\eps^{1/4}-\eps)|V_j| \rfloor - k\\
 & \overset{\eqref{lem:pathg}}> \eps^{1/4}|V_j| - \eps^{1/2}|V_j|\\
 &\ >  4\eps^{1/2}|V_j|.
 \end{align*}
 Furthermore, for $j=3\ldots, \ell-2$, we know by~\eqref{fitsinthepair} that
 \begin{equation*}
 |V_j-U|  \geq |V_j| - (1-2\eps^{1/4})|V_j| \geq 4\eps^{1/2}|V_j|,
  \end{equation*}
 thus (g) is established.
 
 In a similar way we see (f). For all even $j\in[\ell -1]$, we have
\begin{align*}
 |V_j-U| & = |V_j| - \sum_{h=1}^{i-1} \frac{q^h_j+1}{2} \\
 &\overset{\eqref{fitsinthepair}}\geq \frac{1}{1-2\eps^{1/4}}\sum_{h=1}^{k} \frac{q^h_j+1}{2} - \sum_{h\in[k], h\neq i} \frac{q^h_j+1}{2}\\
  &\geq \frac{1}{1-2\eps^{1/4}} \frac{q^i_j+1}{2}.
  \end{align*}
  For odd $j\in[\ell -1]$ it suffices to notice that $|W^i_j|=|W^i_{j-1}|$ by~\eqref{gleichgross}.

 \end{proof}

\section{The proof of the main theorems}

\subsection{Fixing the parameters}

We shall prove at the same time, and with the same parameters $c$ and $n_0$, Theorem~\ref{main1} for even $n\geq n_0$ and Theorem~\ref{main2} for odd $n\geq n_0$. Recall that we only need to worry about the implication (b) $\Rightarrow$ (a).

Suppose that $\Delta>2$ and, for odd $n$,  also a fixed $k\in \mathbb N$ are given. Set 
\begin{equation}\label{epsdef}
 \eps:=\frac{1}{200k\cdot \Delta^{4000 c_2\Delta(\log \Delta)^2}}
\end{equation}
where the constant $c_2$ comes from Theorem~\ref{linram}, and set
\begin{equation}\label{defmreg}
  m_{\rm reg}:= \frac{n_{\rm even}}{\eps^6},
\end{equation}
where $c_2>1$ comes from Theorem \ref{linram} and $n_{\rm even}$ is given by Theorem~\ref{jmy-even} for the input 
\[
\beta:=\epsilon^{1/5}.
\]
For the parameters $\epsilon$ and $m_{\rm reg}$ the Regularity Lemma (Theorem~\ref{regu}) yields numbers $M_{\rm reg}$ and $n_{\rm reg}$.
We now set 
\begin{equation}\label{c}
 c:=\frac{\eps^2}{16M^2_{\rm reg}}
\end{equation}
and 
\begin{equation}\label{n0}
 n_0:=\frac{M_{\rm reg}n_{\ref{benevides}}}{4\eps^{1/2}(1-\eps)},
\end{equation}
where $n_{\ref{benevides}}$ is given by Lemma \ref{benevides} for $\delta':=\eps^{1/4}$ and $\eps':=\eps^{1/2}$.

Now suppose $n\geq n_0$ and let
\[
N := \left\{ 
\begin{array}{l l}
  r(C_n) & \quad \mbox{if $n$ is even,}\\
  r(C_n) + k -1 & \quad \mbox{if $n$ is odd.}\\ \end{array} \right. 
  \]
Let a $2$-colouring of the edges of $K_N$ be given, with colours red and blue, say, which induce the (spanning) subgraphs $G^r$ and $G^b$. 

Further, let $D$ be a set of chords of $C_n$ satisfying assumptions~\ref{thm1cond1} and~\ref{thm1cond2} of Theorems~\ref{main1} and~\ref{main2}, and such that $C_n\cup D$ is bipartite if $n$ is even and $k$-almost bipartite if $n$ is odd. Our aim is to find a monochromatic copy of $C_n\cup D$ in~$K_N$.

\subsection{Applying regularity to $G^r$ and $G^b$}

The regularity lemma (Theorem~\ref{regu}) applied to $G^r$ yields a partition $V_0$, $V_1,\ldots , V_t$ of $V(G^r)$ that is $\eps$-regular with respect to $G^r$. It is well-known that this partition is also $\eps$-regular with respect to~$G^b$. 

Substituting each $V_i$, $i>0$, with a vertex $i$ and each $\eps$-regular pair $(V_i, V_j)$ with an edge $ij$, we obtain the so-called reduced graph $R$ with vertex set $[t]=\{1, 2,\dots, t\}$ such that
\begin{enumerate}[(I)]
 \item $M_{\rm reg}\geq t\geq m_{\rm reg}$;\label{Rlarge}
 \item $N/t\geq |V_i|\geq (1-\eps)N/t$;\label{sizeVi}
 \item $\delta(R)\geq t-1-\eps t$.\label{delta}
\end{enumerate}

We define an edge-(multi)colouring of $R$ as follows. Colour the edge $ij\in E(R)$  red (blue) if the density of $(V_i, V_j)$ in $G^r$ (in $G^b$) is at least $d$, where
\begin{equation}\label{defd}
 d:=4\max\left\{\eps^{1/100}, (4\Delta\cdot 2\eps)^{1/\Delta} \right\}<\frac{1}{\Delta^{40}}.
\end{equation}
Notice that every edge of $R$ receives a colouring because $d<1/2$.
 
Let us fix some more notation: if $xy$ is a red/blue edge in $R$ (or in $G^r$ or in $G^b$), then well call $x$ a {\em red/blue neighbour} of $y$.
 
 \subsection{Preparing $C_n\cup D$}\label{preparing}
 
  Set 
  \[
  z:= \frac{\eps^2 n}{M_{\rm reg} |D|}
  \]
 and, since $|D|\leq cn$, notice that
\begin{equation}\label{z_lb}
 z\geq \frac{\eps^2}{cM_{\rm reg}} \overset{\eqref{c}}\geq 16M_{\rm reg}.
\end{equation}
    Let $H''$ be the subgraph of $C_n\cup D$ induced by $D$ and all $V(D)$-paths in $C_n$ of length at most $z$. Then 
 \begin{equation}\label{sizeH''}
   |H''| \leq 2z|D| = 2 \frac{\eps^2 n}{M_{\rm reg} }.
 \end{equation}
 Now, $C_n\cup D- E(H'')$ is the union $\mathcal P''$ of several  $V(D)$-paths, each of them longer than $z$. We wish to have them all of the same parity. For this reason, for each such even path $P=v_1,v_2,\dots, v_{2i+1}$, we remove vertex $v_2$ and edge $v_1v_2$ from $P$, add them to $H''$, and call the obtained graph $H'$. Observe that we have added at most $|H''|$ new vertices. Thus,
 \begin{equation}\label{sizeH'}
     |H'| \leq 2|H''|,
  \end{equation}
  and
  \begin{equation}\label{oddpaths}
  \text{all $V(D)$-paths of $C_n\cup D- E(H')$ are odd.}
  \end{equation}
  
  If $n$ is even, then $C_n\cup D$, and thus also $H'$, is bipartite. By Fact~\ref{bipa} and by~\eqref{oddpaths}, we can arrange the colour classes $U_1$, $U_2$ of $H'$ so that $C_n\cup D- E(H')$ is the edge-disjoint union of  $U_1$--$U_2$-paths of length at least $z-1$. Set $H:=H'$.
    
If $n$ is odd, then, since $C_n\cup D\supseteq H'$ is $k$-almost-bipartite, the graph $H'$ is $3$-colourable. By Fact~\ref{tripa}, there is a supergraph $H$ of $H'$ contained in $C_n\cup D$, with tripartition $U_1, U_2, U_3$ of its vertex set such that all $V(H)$-paths in $C_n\cup D-E(H)$ are odd $U_1$--$U_2$ paths of length at least $z-3$.
  
  So, in both cases we have obtained a  graph $H$ with partition classes $U_1$, $U_2$ and (if $n$ is odd) $U_3$ such that, by~\eqref{sizeH''} and by~\eqref{sizeH'},
 \begin{equation}     \label{sizeH} 
     |H| \leq 3|H'| \leq 12 \frac{\eps^2 n}{M_{\rm reg} }.
 \end{equation}    
Furthermore, for each $V(H)$-path $P$ of $C_n-E(H)$, we have that
  \begin{equation}
     \text{ $P$ is an odd $U_1$--$U_2$ path,}\label{U1U2paths} 
  \end{equation} 
 and, since $z\geq 16M_{\rm reg}\geq 6$ by~\eqref{z_lb}, also
 \begin{equation}  
   \text{$|E(P)|\geq z-3\geq \frac z2 \geq 8M_{\rm reg}$.}\label{howlong}
 \end{equation}
   
 From now on, we split our proof into two cases, depending on the parity of~$n$. We shall deal with even $n$ in Section~\ref{sec:even}, and with odd $n$ in Section~\ref{sec:odd}.
 
  \subsection{Even $n$}\label{sec:even}
 
 We wish to apply Theorem~\ref{jmy-even} to $G:=R$ with $\beta=\eps^{1/5}$ (as fixed above),  $m:=\lceil2t/3\rceil$, and $M:=t$. Clearly,  by~\eqref{Rlarge} and by the choice of $m_{\rm reg}$, we have that $m\geq \lceil\frac 23 m_{\rm reg}\rceil>n_{\rm even}$ and 
 \begin{equation}\label{tme}
   t\geq m_{\rm reg}>1/\eps.
 \end{equation}
 Thus, as $m\leq \frac 23 t+1=\frac 23 M+1$, we obtain that
 \[
 M\geq \frac 32m-\frac 32\overset{\eqref{tme}}\geq \frac 32m-\frac 32\eps M\geq \frac 32m-\frac 94\eps m \geq \left(\frac 32-\beta\right)m.
 \]
 Finally, since $4\eps\leq\beta$, we know that 
 \begin{align}\label{mindegR}
  \delta(R)& \overset{\eqref{delta}}\geq M-1-\eps t\notag\\
 & \overset{\eqref{tme}}> M- 2\eps t  \notag\\
 & \geq \  M- 4\eps \frac t2\notag\\
 & \geq\  M-4\eps m\notag \\
 & \geq\  M-\beta m.
\end{align}

 So, Theorem~\ref{jmy-even} and Corollary~\ref{coro-even} yield either 
 \begin{enumerate}[\bf (A)]
\item \label{evencycle} $R$ has a monochromatic even cycle $C_\ell$  with $\ell\geq \frac 23(1+\eps^{1/5})t$, or
 \item \label{evenstructure} there is a colour $i\in\{1,2\}$, and sets $A,B\subseteq V(R)$ so that, for $\xi=\eps^{1/100}<\tfrac{1}{16^{10}}$, 
  \begin{enumerate}[(B1)]
 \item the minimum degree in colour $i$ in $R[A]$ is at least $(1-\xi)|A|$ and the maximum degree in colour $3-i$ in $R[A]$ is at most $\xi|A|$,
 \item in colour $3-i$ in $R[A,B]$, every vertex in $A$ has degree at least $(1-\xi)|B|$ and every vertex in $B$ has degree at least $(1-\xi)|A|$,
 \item $|A|\geq (\frac 23-\xi)t$ and $|B|\geq (\frac 13-\xi)t$.
 \end{enumerate}
  \end{enumerate}

  Depending on which case we obtained, we split our proof further into two cases.
  
 \subsubsection{Case~\eqref{evencycle}}\label{sec:evencycle}
  
  In this case, $R$ contains an even monochromatic (say, red) cycle $C_\ell$ with vertices $V_1, V_2, \ldots, V_\ell, V_1$ (in this order). Let $V_1'\subseteq V_1$, $V_\ell'\subseteq V_\ell$ contain all those vertices that are $\eps$-typical in $G^r$ with respect to $V_2$ or $V_{\ell-1}$, respectively. Thus, because of Fact~\ref{fact1}, we have  
$$
  \lambda:=\min\{|V'_1|,|V'_\ell|\} \geq (1-\eps)|V_1|.
$$
It follows from the definition of $\eps$-regularity that the pair $(V'_1, V'_\ell)$ is $2\eps$-regular with density at least $d-\epsilon>d/2\geq (4\Delta\cdot 2\eps)^{1/\Delta}$.  Moreover, by~condition 2 of Theorems~\ref{main1} and~\ref{main2}, we know that $\Delta (H)\leq\Delta$.
  
  Thus we verified almost all conditions of Lemma~\ref{blow-up}, which we would like to use with parameters $\Delta$, $p:=d-\eps$, $\eps':=2\eps$, $\lambda$, $s:=|H|$ and graphs $R:=K_2$, $H$, $G:=G^r[V'_1\cup V'_\ell]$ in order to embed the bipartite graph $H$ into $G^r[V'_1\cup V'_\ell]$. By \eqref{blw-up:1}, it is only left  to check that
  \[
  |H|\leq  2\Delta\cdot 2\eps\cdot\lambda.
  \]
  
This inequality holds because we have
 \begin{align}\label{Hsmallenough}
  |H| & \overset{\eqref{sizeH}}\leq 12\frac{\eps^2}{M_{\rm reg}}\cdot n
  \leq 12\frac{\eps^2}{M_{\rm reg}}\cdot N\notag\\
 & \overset{\eqref{epsdef}}\leq 4\Delta (1-2\eps)\cdot 2\eps\cdot \frac N{M_{\rm reg}}
    \overset{\eqref{Rlarge}} \leq 4\Delta (1-\eps)^2\cdot 2\eps\cdot\frac N{t}\\
    &\overset{\eqref{sizeVi}}\leq 4\Delta\cdot 2\eps\cdot(1-\eps)\min\big\{|V_1|, |V_\ell|\big\}
  \leq 4\Delta\cdot 2\eps\cdot \min\big\{|V'_1|,|V_\ell'|\big\}\notag\\[6pt]
 &=2\Delta\cdot 2\eps\cdot\lambda.\notag
  \end{align}
  
So Lemma~\ref{blow-up} guarantees we can embedd $H$ as planned.  We will now embed the rest of $C_n\cup D$, that is, the long connections between the vertices of $H$.

 \begin{figure}[htb]
  \centering
  \subfloat[First, the graph $H$ is embedded in the pair $(V_1,V_\ell)$ via Lemma~\ref{blow-up}.]{\label{fig:caseAcycle1}
  \def\svgwidth{140pt}
  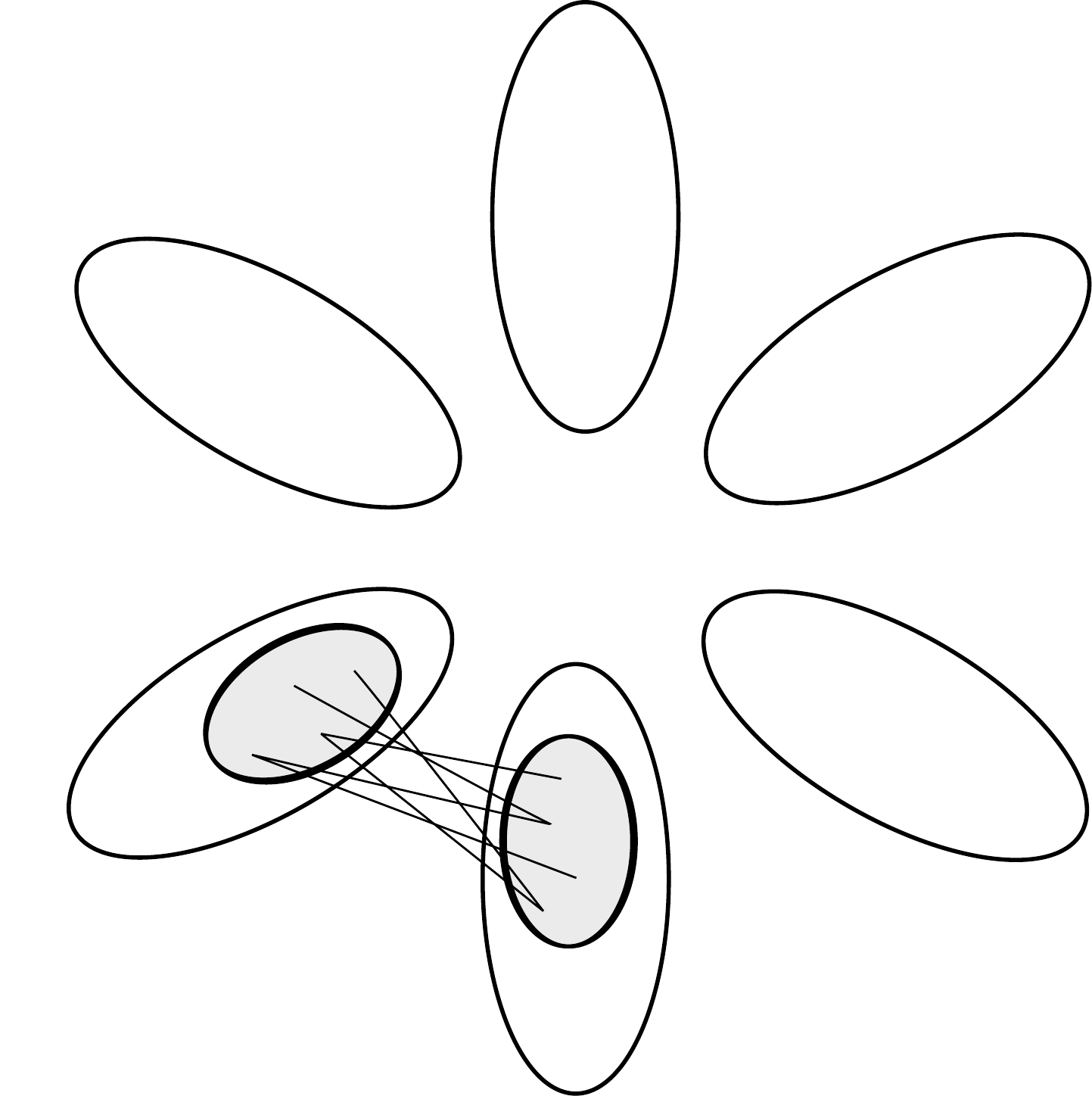}  
  \hfill
  \subfloat[Then we use Lemma~\ref{lem:path} to embed the long connections.]{\label{fig:caseAcycle2}  \def\svgwidth{160pt}
  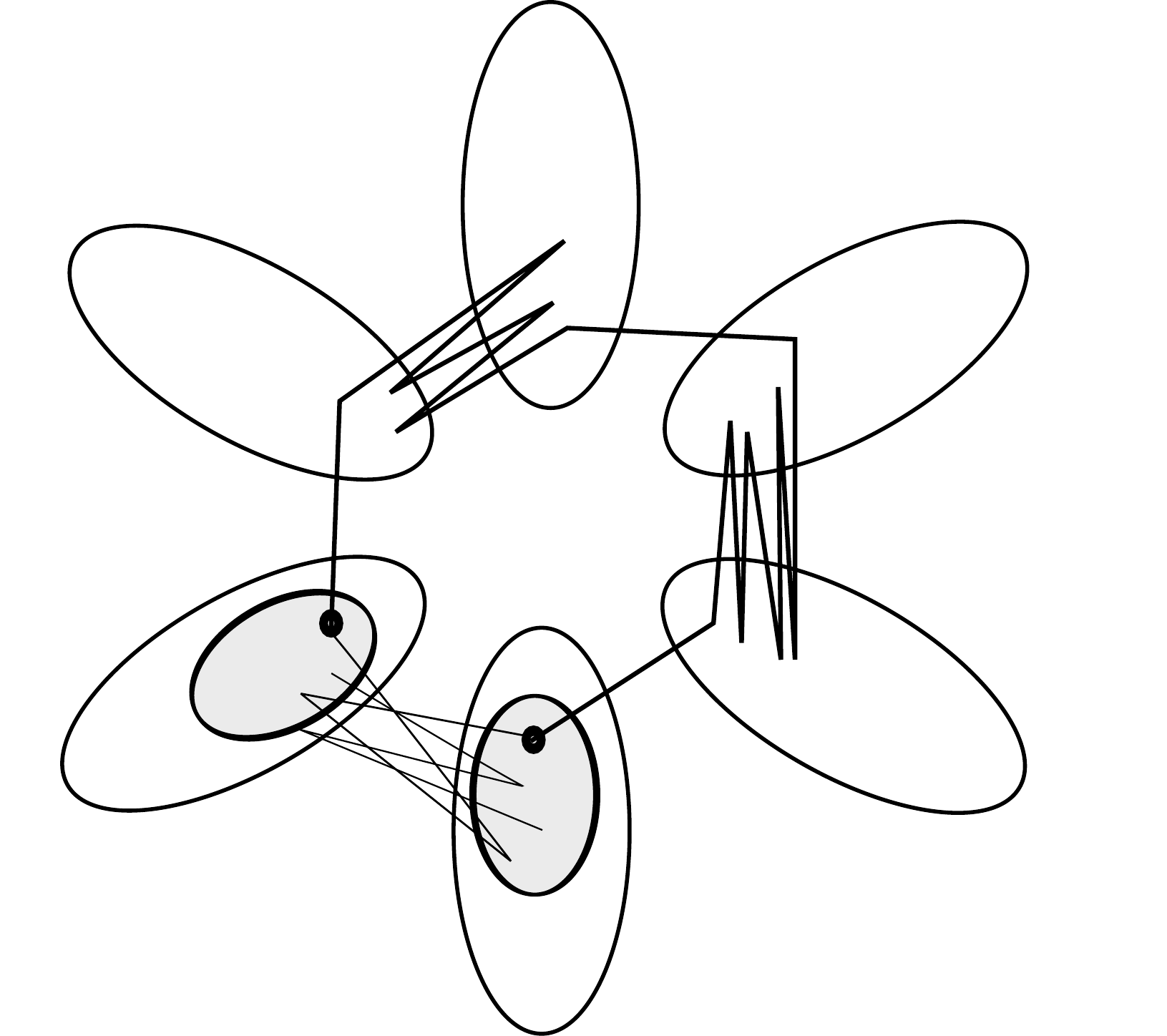}                  
  \caption{Our embedding of $C_n\cup D=H\cup\bigcup P_i$ in $G^r$.}
  \label{fig:caseAcycle}
\end{figure}
 \medskip
 
By~\eqref{U1U2paths}, we know that the edges of $C_n-E(H)$ form internally vertex-disjoint paths $P_1,P_2,\ldots, P_k$ such that for all $i\in[k]$ the first vertex   of $P_i$ is mapped to $a_i\in V_1$, while the last vertex of $P_i$ is embedded in  $b_i\in V_\ell$. By the choice of $V'_1$ and $V'_\ell$, we know that, for each $i\in[k]=\{1,\ldots k\}$,
\begin{align}\label{aibiepsreg}
&\text{$a_i$ is $\eps$-typical with respect to $V_2$,}\notag \\ \text{  and }&\text{$b_i$ is $\eps$-typical with respect to $V_{\ell -1}$.}
\end{align}
  
We now wish to employ Lemma~\ref{lem:path} to embed the remaining part of $H$ into $G^r$. We have just verified that~\eqref{lem:pathc} and~\eqref{lem:pathd} of Lemma~\ref{lem:path} hold. Also,~\eqref{lem:pathh} and ~\eqref{lem:pathf} are clear by construction.

For~\eqref{lem:pathg}, note that by construction,
$$
 k\leq |H|\overset{\eqref{sizeH}}< \eps^{1/2}|V_j|.
$$

 For~\eqref{lem:patha}, it suffices to observe that for each $i\in[k]$, 
 inequality \eqref{howlong} implies that
 $$
   p^i \geq z-3 \geq 8M_{\rm reg} \geq 3\ell.
  $$
  
 Let us next establish~\eqref{lem:pathb}. For this, it is enough to see that, as $M\geq n\geq 1/\eps$, we have
 \begin{align*}
  \sum_{i=1}^k p^i & \leq n\\
  &= \frac 23 (N+1)\\[6pt]
  &\leq \frac 23\cdot \frac{N}{1-\eps}\\[6pt]
  & \overset{\eqref{sizeVi}}\leq\frac 23\cdot \frac{|V_1|t}{(1-\eps)^2}\\[6pt]
  & \leq\frac 23 \cdot\frac{t}{1-3\eps^{1/4}} (1-2\epsilon^{1/4})|V_1| \\[6pt]
  & \leq\left(\frac23(1+\eps^{1/5})t-2\right)(1-2\epsilon^{1/4})|V_1|\\[6pt]
  & \overset{\eqref{evencycle}}\leq (\ell -2)(1-2\epsilon^{1/4})|V_1|.
 \end{align*}

\medskip
Finally, for~\eqref{lem:pathe} note that
\begin{align*}
 4\epsilon^{1/2}|V_j| 
&\overset{\eqref{sizeVi}}\geq 4\epsilon^{1/2}(1-\eps)\frac Nt \\[6pt]
&\geq 4\epsilon^{1/2}(1-\eps)\frac {n_0}t\\[6pt]
&\overset{\eqref{c}}\geq\frac{M_{\rm reg}n_{\ref{benevides}}}t \\[6pt]
&\overset{\eqref{Rlarge}}\geq n_{\ref{benevides}}.
\end{align*}
  
  This means we can use Lemma~\ref{lem:path} as planned and thus finish our embedding in this case.

 \subsubsection{Case~\eqref{evenstructure}}\label{sec:evenstructure}
 
We assume the colour $i$ is red and colour $3-i$ to be blue. Let $A_G$ denote the union of all vertices that belong to clusters in $A$. We estimate the number of blue edges in $A_G$ as follows: there are
\begin{itemize}
 \item at most $|A|\cdot\binom{n/t}{2}$ blue edges within clusters in $A$;
 \item at most $|A|\cdot\eps t\cdot (n/t)^2$ blue edges in irregular pairs in $A$;
 \item at most $\xi t |A|\cdot (n/t)^2$ blue edges in pairs corresponding to blue edges in $A$;
\item at most $\binom{|A|}{2}\cdot d (n/t)^2$ blue edges in the remaining pairs in $A$ (those which are coloured only by red).
\end{itemize}
Using the facts that $|A|\leq t$, that $1/t<\eps$, and that $\xi=\eps^{1/100}\leq d/4$, we find that $A_G$ contains in total at most 
$$
 \left(\frac \eps2+\eps+\xi+\frac d2\right)n^2
  \leq 
 d n^2
$$ 
blue edges. Since the size of $A_G$ is bounded by
$$
 |A_G|\geq |A|\cdot \frac{(1-\eps)N}{t}\overset{(B3)}\geq \left(1 -\xi\right)(1-\eps)n,
$$
we obtain that $A_G$ contains a subset $A'$ of size at least $|A_G|-2\sqrt{d}\geq (1-3d^{1/2})n$ such that, in $G^r$, each vertex of $A'$ has degree at least $|A'|-d^{1/2}n$.

Let $C$ be the set of all vertices in $V(G)\setminus A'$ that send at least $6\Delta d^{1/2} n$ red edges to $A'$ and suppose that $|A'|+|C|\geq n$. Since
$$
 2d^{1/2}\overset{\eqref{defd}}< \frac{2}{\Delta^{20}}<\frac{1}{\Delta^2+4},
$$ 
we may use Lemma \ref {GenEmbed} and conclude that the red graph $G^r[A'\cup C]$ contains $C_n\cup D$.
 
 So let us from now on assume that  $|A'|+|C|< n$. Setting $B':=V(G)\setminus (A'\cup C)$, this means that  $|B'|\geq n/2$. Moreover, by definition of $B'$, each vertex from $B'$ sends at least $|A'|-6\Delta d^{1/2} n$ blue edges to $A'$.
 We embed $C_n\cup D$ in $G^b[A', B']$ as follows.
 
 Say $C_n\cup D$ has the bipartition classes $X$ and $Y$. We know that $|X|=|Y|=|C_n|/2$. Now, embed all of $X$ into $B'$ arbitrarily. Then, we embed the vertices in $Y$ successively in the following way: Say we wish to embed some vertex $y\in Y$. Vertex $y$ has at most $\Delta$ neighbours in $X$. The images of these neighbours in $B'$ have at least $|A'|-\Delta\cdot 6\Delta d^{1/2} n>n/2$ common blue neigbors in $A'$. Since we have used at most $|Y|-1<n/2$ vertices of $A'$ in earlier steps, there is a vertex in $A'$ we may use to embed $y$. 
  Thus we manage to embed all of $Y$ as desired in $A'$, which finishes the embedding of $C_n\cup D$.
  
 \subsection{Odd $n$}\label{sec:odd}
 
 In this case, we wish to apply Theorem~\ref{jmy-odd} with parameters $\gamma:=\beta=\eps^{1/5}$, $\bar m:=t/2$, and $\bar M:=t$ to the graph $G:=R$. From~\eqref{Rlarge} and~\eqref{defmreg}, we get that $n> \gamma^{-6}$, and, clearly, we have $\bar M\geq (3/2+ 14\gamma)\bar m$.
 Finally, the fact that $\delta (G)\geq  M-\beta  m\geq \bar M-\gamma \bar m$ follows from~\eqref{mindegR}. 
 
Thus Theorem~\ref{jmy-odd} yields one of the following substructures:
 \begin{enumerate}[\bf (A)]
  \addtocounter{enumi}{2}
  \item \label{oddcycle} $R$ has a monochromatic odd cycle $C_\ell$  with $\ell\geq (1+\eps^{1/5})t/2$, or
  \item \label{oddstructure} there is a colour $i\in\{1,2\}$, and sets $A,B\subseteq V(R)$ so that 
  \begin{enumerate}[(D1)]
   \item all edges in $R[A]$ and in $R[B]$ have only colour $i$,\label{d1}
   \item all edges between $A$ and $B$ have only colour $3-i$, and\label{d2}
   \item $|A|,|B|\geq (1-\eps^{1/5})t/2$.\label{d3}
 \end{enumerate}  
\end{enumerate}
 Depending on which case we obtained, we split our proof further into two cases.
  
 \subsubsection{Case~\eqref{oddcycle}}\label{sec:oddcycle}
 
 Suppose that $C_\ell$ is red and contains the vertices $V_1, V_2, \ldots, V_\ell, V_1$ (in this order). First of all, observe that since $\ell$ is odd, the set of all $2$-chords of $C_\ell$ span an odd cycle $C'_\ell$ of the same length in $R$. Now, we split our proof further depending on whether or not $C'_\ell$ contains a red $2$-chord.
  
 \paragraph{$C'_\ell$  has a red $2$-chord.}\label{sec:1oddcycle}
 
 Let $V_\ell V_2$ be the red $2$-chord of $C_\ell$. Our plan is to use the Lemma~\ref{blow-up} to embed $H$ into the $\eps$-regular triangle $V_\ell V_1V_2$ such that all vertices of $H$ that have neighbours in $G-H$ are mapped to vertices of $V_\ell$ or $V_2$ that are typical in red with respect to $V_{\ell -1}$  or to $V_3$, respectively. 
 
  For this, let  $V_\ell'\subseteq V_\ell$ and $V_2'\subseteq V_2$ consist of all vertices that are $\eps$-typical in $G^r$ with respect to $V_{\ell-1}$ or to $V_3$, respectively. Then, by Fact~\ref{fact1}, we have  
 $$
  \lambda:=\min\{|V'_2|,|V'_\ell|\} \geq (1-\eps)|V_2|,
 $$
 from the definition of $\eps$-regularity we have that the pairs $(V'_\ell, V'_2)$, $(V'_\ell, V_1)]$ and $(V_1, V'_2)$ are $2\eps$-regular, and~\eqref{Hsmallenough} from the even case implies that $ |H|\leq 2\Delta\cdot 2\eps\cdot\lambda$.
  Hence Lemma~\ref{blow-up} with parameters $\Delta$, $p:=d-\eps$, $\eps':=2\eps$, $\lambda$, $s:=|H|$ and graphs $R:=K_3$, $H$, $G:=G^r[V'_\ell\cup V_1\cup V'_2]$ yields an embedding of the tripartite graph $H$ into $G$. Recall that the tripartition classes of $H$ were $U_1$, $U_2$ and $U_3$. We map $U_1$ to $V_1$, $U_2$ to $V_\ell'$ and $U_3$ to $V_1$.
  
We will now embed the rest of $C_n\cup D$, that is, the long connections between the vertices of $H$.  We plan to use Lemma~\ref{lem:path} on $G^r$ with $\ell^{Lem~\ref{lem:path}}:=\ell-1$ and $V_j^{Lem~\ref{lem:path}}:=V_{j+1}$.
 
 By~\eqref{U1U2paths}, we know that the edges of $C_n-E(H)$ form internally vertex-disjoint odd paths $P_1,P_2,\ldots, P_k$ such that, for all $i\in[k]$, the first vertex of $P_i$ is mapped to some  $a_i\in V_2$, while the last vertex of $P_i$ is embedded to some $b_i\in V_\ell$. By the choice of $V'_2$ and $V'_\ell$, we know that, for each $i\in[k]$, we have that
\begin{equation}\label{aibiepsregodd}
 \text{ $a_i$ is $\eps$-typical to $V_3$ and $b_i$ is $\eps$-typical to $V_{\ell -1}$.}
\end{equation}
  
  This means that conditions~\eqref{lem:pathh}, \eqref{lem:pathf}, \eqref{lem:pathc} and~\eqref{lem:pathd} of Lemma~\ref{lem:path} are satisfied. Conditions~\eqref{lem:pathe},~\eqref{lem:pathg} and~\eqref{lem:patha} follow as in the even case.
  
  For~\eqref{lem:pathb}, we calculate similarly as in the even case:
   \begin{align*}
  \sum_{i=1}^k p^i & \leq n\\
  &\leq \frac 12 (N+1)\\  
    & \leq\left(\frac12(1+\eps^{1/5})t-2\right)(1-2\epsilon^{1/4})|V_1|\\ 
  & \overset{\eqref{oddcycle}}\leq (\ell -2)(1-2\epsilon^{1/4})|V_1|.
 \end{align*}

  Thus with the help of Lemma~\ref{lem:path} we may complete the embedding of $H$ to an embedding of $C_n\cup D$.

 \paragraph{All $2$-chords of $C'_\ell$ are blue.}\label{sec:2oddcycles}
 
 Let $V'_1\subseteq V_1$ be the set of all vertices in $V_1$ that are typical in red with respect to  $V_\ell$ and $V_2$, and typical in blue with respect to $V_{\ell-1}$ and $V_3$. Since $|V_1'|\geq |V_1|-4\epsilon |V_1|$ by Fact~\ref{fact1}, we have
 \begin{align*}
 |V_1'|&\geq (1-4\epsilon) |V_1|\geq (1-5\epsilon)\frac{N}{t} \geq \frac{n}{M_{\rm reg}}\\
 &\geq 2^{c_2\Delta(\log\Delta)^2}\cdot12 \frac{\eps^2n}{M_{\rm reg}}\geq 2^{c_2\Delta(\log\Delta)^2}|H|\geq R(H).
 \end{align*}
Hence, by Theorem \ref{linram}, we find a monochromatic copy of $H$ inside $V'_1$. 
 
 Now only have to embed the long connections (in $G^r$ if our copy of $H$ is red, and in $G^b$ if our copy of $H$ is blue). This will be done as before, either in $C_\ell$ or in $C_\ell'$, with the help of Lemma~\ref{lem:path}. Now the input for Lemma~\ref{lem:path} is $\ell^{{\rm Lem}~\ref{lem:path}}:=\ell+1$ and $V_j^{{\rm Lem}~\ref{lem:path}}:=V_{j}$ for $j\in[\ell]$ and $V_{\ell +1}^{{\rm Lem}~\ref{lem:path}}:=V_{1}$.

Conditions~\eqref{lem:pathe}--\eqref{lem:pathd} of  Lemma~\ref{lem:path}  are seen to hold as above, so we can finish our embedding as planned.

 \subsubsection{Case~\eqref{oddstructure}}\label{sec:oddstructure}

Assume again that colour $i$ is red and $3-i$ is blue.

Again, let $A_G$, $B_G$ denote the union of all vertices that belong to clusters in $A$, or in $B$, respectively. We have  that
$$
 |A_G|\geq |A|\cdot\frac{(1-\eps)N}t\geq (1-\eps)(1-\eps^{1/5})\frac{2n-1+k}{2}\geq (1-2\eps^{1/5})n. 
$$
Similarly, $|B_G|\geq  (1-2\eps^{1/5})n$.

Now we shall estimate the number of blue edges in $G[A_G]$ and $G[B_G]$ and the number of red edges in $G[A_G, B_G]$. There are
\begin{itemize}
 \item at most $|A|\cdot\binom{n/t}{2}$ blue edges within the clusters in $A$;
 \item at most $|A|\cdot\eps t\cdot (n/t)^2$ blue edges in irregular pairs in $A$ (i.e.~non-edges of $R[A]$, here we use  \eqref{delta});
\item at most $\binom{|A|}{2}\cdot d (n/t)^2$ blue edges in the remaining pairs in $A$ (here we use (D1)).
\end{itemize}
Using the facts that $|A|\leq t$, that $1/t<\eps$, and that $\eps<d/4$, we find that $A_G$ contains in total at most
$$
 \left(\frac \eps2+\eps+\frac{d}{2}\right)n^2
  \leq 
 d n^2
$$ 
blue edges. In the same way we obtain that $B_G$ contains at most $dn^2$ blue edges.

As for the red edges in $G[A_G, B_G]$: There are 
\begin{itemize}
 \item at most $|A|\cdot\eps t\cdot (n/t)^2$ red edges in irregular pairs corresponding to non-edges in $R[A,B]$, and
\item at most $|A|\cdot |B|\cdot d (n/t)^2$ red edges in the pairs corresponding to edges in $R[A,B]$ (here we use (D2)).
\end{itemize}
Using the facts that $|A|,|B|\leq t$, that $1/\eps<t$, and that $\eps<d/4$, we find that $G[A_G,B_G]$ contains at most 
$$
 (\eps+d)n^2
  \leq 
 2d n^2
$$ 
red edges.  We remove at most $3\sqrt{d} n$ vertices with degree at least $\sqrt{d}n$ in one of $G^b[A']$, $G^b[B']$, and $G^r[A', B']$, and obtain sets $A'\subset A_G$ and $B'\subset B_G$ such that 
$$
 |A'|, |B'|\geq (1-2\eps^{1/5}-3\sqrt{d})n
$$
and such that the graphs $G^b[A']$, $G^b[B']$, and $G^r[A', B']$ have maximum degree at most $\sqrt{d}n$. 

Set $\eps'=2\eps^{1/5}+3\sqrt{d}$, and let $C_A$ and $C_B$ be the sets of all vertices in $V(G)\setminus (A'\cup B')$ that send at least $3\Delta\eps' n$ red edges to $A'$ or to $B'$, respectively. If $|C_A|+|A'|\geq n$, then we  use Lemma \ref {GenEmbed} (note that $\eps'<\frac 1{\Delta^2+4}$ by~\eqref{epsdef} and by~\eqref{defd}) and conclude that the red graph $G^r[A'\cup C_A]$ contains $C_n\cup D$.

So let us from now on assume that  $|C_A|+|A'|, |C_B|+|B'|< n$. This means that there is a set $S$ of $$k\ \leq \ 2n-1+k-1-(|C_A|+|A'| + |C_B|+|B'|)$$
 vertices which each send at least $|A'|-3\Delta\eps' n$ blue edges to $A'$ and at least $|B'|-3\Delta\eps' n$ blue edges to $B'$. Let  $A^S$ and $B^S$ be the set of their common blue neighbours in $A'$, or in $B'$, respectively. Note that
\begin{align}
|A^S|, |B^S|\ \geq \ (1-\eps')n-k\cdot 3\Delta\eps' n\  &=\ (1-(3k\Delta +1)\eps')n\label{ASBSbigenougH}\\ 
&\geq \ n/2,\label{ASBSbigenough}
\end{align}
where the last inequality follows from~\eqref{epsdef}.

 We embed $C_n\cup D$ as follows: let $Z$ be the set of $k$ independent vertices such that $C_n\cup D-Z$ is a bipartite graph with bipartition $X$ and $Y$. Clearly, the sets $X$ and $Y$ differ in size by at most $k$, and therefore we have that $|X|, |Y|\leq n/2$.
 
Embed all of $Z$ into $S$ and all of $X$ into $B^S$ arbitrarily (that $X$ fits into $B^S$ in ensured by~\eqref{ASBSbigenough}). Then embed the vertices in $Y$ successively, similarly as in the even case. For each vertex $y\in Y$ that we wish to embed in $A^S$, we know that the images of its at most $\Delta$ neighbours have at least 
\[
|A^S|-\Delta\cdot \sqrt d n \overset{\eqref{ASBSbigenougH}}\geq (1-4k\Delta \eps' -\sqrt d\Delta)n >0
\]
common neighbours in $A^S$ (the last inequality follows from~\eqref{epsdef} and from~\eqref{defd}), and thus there is space for embedding $y$ properly.

\end{document}